   \newtheorem{lemma}{Lemma}[section]
   \newtheorem{theorem}{Theorem}[section]
   \newtheorem{remark}{Remark}[section]
   \newcommand{\be}{\begin{equation}}
   \newcommand{\ee}{\end{equation}}
\begin{document}
    \title{A combined Lax-Wendroff/interpolation approach with finite element method for a three-dimensional system of tectonic deformation model: application to landslides in Cameroon}
   \author{Eric Ngondiep$^{\text{\,a\,b}}$ \thanks{\textbf{Email address:} ericngondiep@gmail.com}}
   \date{$^{\text{\,a\,}}$\small{Department of Mathematics and Statistics, College of Science, Imam Mohammad Ibn Saud\\ Islamic University
        (IMSIU), $90950$ Riyadh $11632,$ Saudi Arabia.}\\
     \text{\,}\\
       $^{\text{\,b\,}}$\small{Research Center for Water and Climate Change, Institute of Geological and Mining Research, 4110 Yaounde-Cameroon.}}
   \maketitle

   \textbf{Abstract.}
    This paper develops an efficient computational technique to assess the landslide responses to tectonic deformation and to predict the implications of large bedrocks landslides on the short and long-term development of the disasters. The considered equations represent a three-dimensional system of geological structure deformation subject to suitable initial and boundary conditions. The space derivatives are approximated using the finite element procedure while the approximation in time derivative is obtained using the Lax-Wendroff and interpolation techniques. The new approach is so called a modified Lax-Wendroff/interpolation method with finite element method. The modified Lax-Wendroff/interpolation scheme is employed to efficiently treat the time derivative term and to provide a suitable time step restriction for stability. Under this time step requirement, both stability and error estimates of the new algorithm are deeply analyzed using a constructed strong norm. The theory suggests that the developed computational technique is second-order accurate in time and spatial convergent with order $O(h^{3})$, where $h$ denotes the space size. A wide set of numerical examples are carried out to confirm the theoretical results and to demonstrate the utility and validity of the proposed numerical scheme. An application to landslides observed in west and center regions in Cameroon from October $2019$ to November $2024$, are discussed.\\
    \text{\,}\\

   \ \noindent {\bf Keywords:} three-dimensional system of tectonic deformation, Lax-Wendroff scheme, interpolation technique, finite element method, combined Lax-Wendroff/interpolation technique with finite element formulation, stability analysis and convergence order.\\
   \\
   {\bf AMS Subject Classification (MSC): 65M12, 65M06, 74H15}.

  \section{Introduction and motivation}\label{sec1}

   \text{\,\,\,\,\,\,\,\,\,\,} Tectonics are the processes that result in the structure and properties of the earth crust and its evolution through time. These processes deal with old cores of continents known as "cratons" and mountain-building along with the interaction of relatively rigid plates which constitute the earth's outer shell. The geometry variation of fault segments within the tectonic regime of a right lateral strike-slip yields structures which accommodate the transpressional, strike-slip and transtensional deformations. Tectonic deformations are generated in a similar manner by assuming small temperatures (less than or equal ten) and sharp difference in the orientation of strike-slip segments at the edges of an important releasing bend. Tectonic plates cover the surface of the earth, slowly move on the ground and get stuck at their edges due to friction. An earthquake releases energy in waves at the focus which travel through the earth's crust and cause the shaking which can generate several hazards or disasters including landslides, tsunamis and flooding (Figure $\ref{fig1}$, \textbf{Figure 1(a)}).

        \begin{figure}
         \begin{center}
         \begin{tabular}{c c}
         \psfig{file=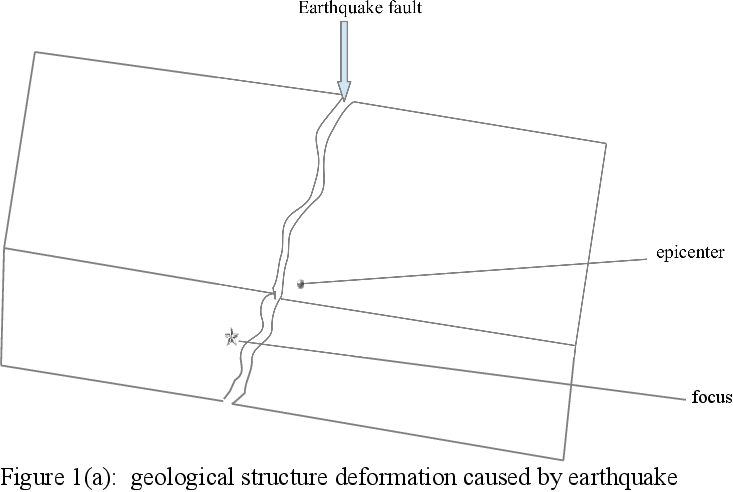,width=8cm} & \psfig{file=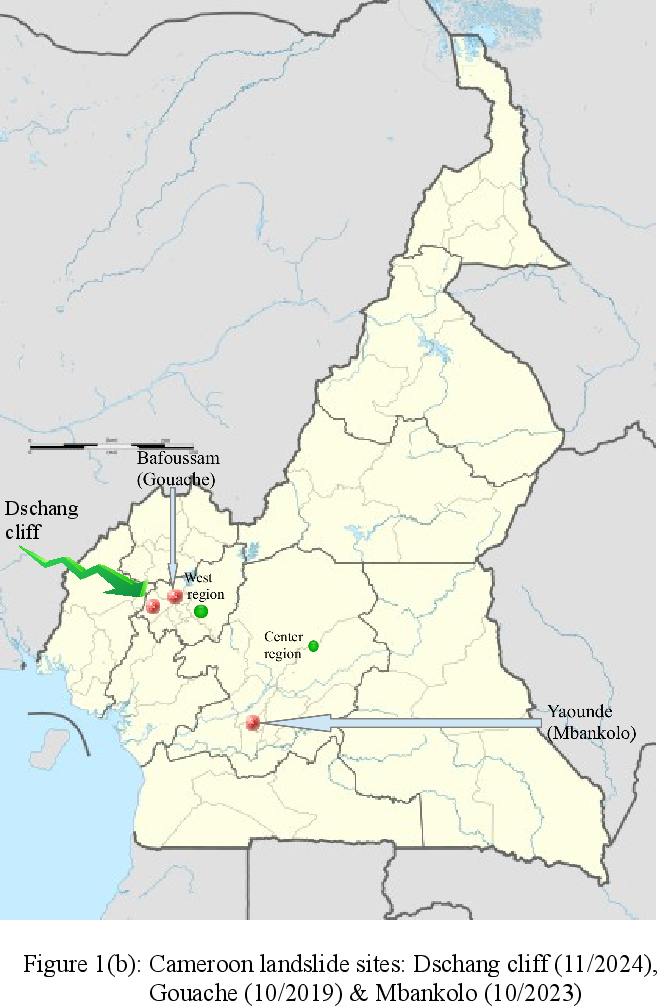,width=5.5cm}\\
         \psfig{file=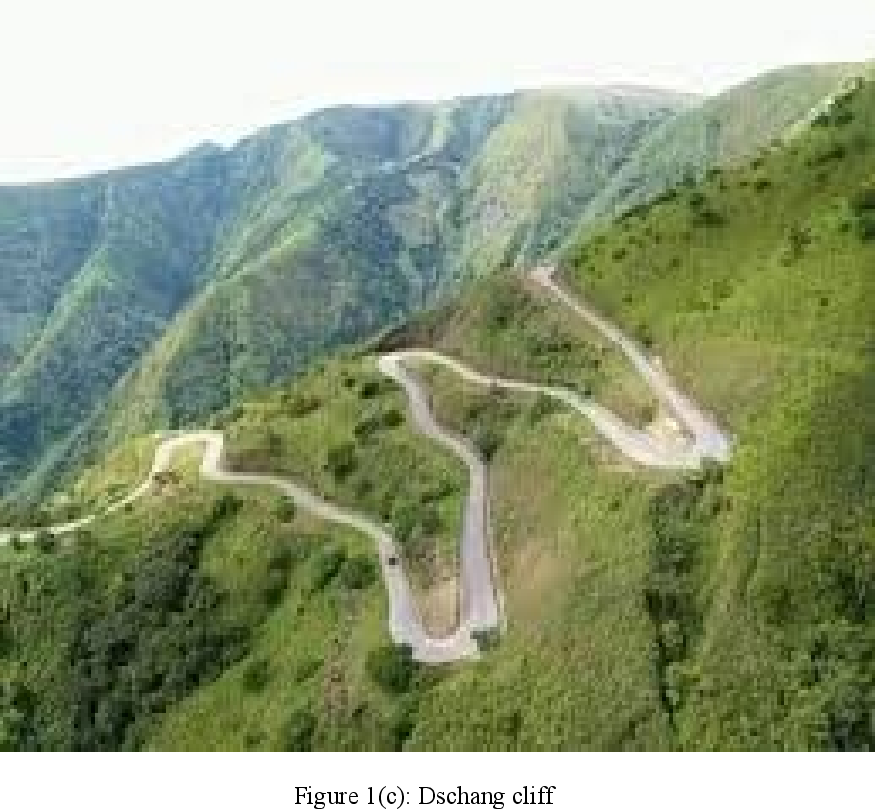,width=8cm} & \psfig{file=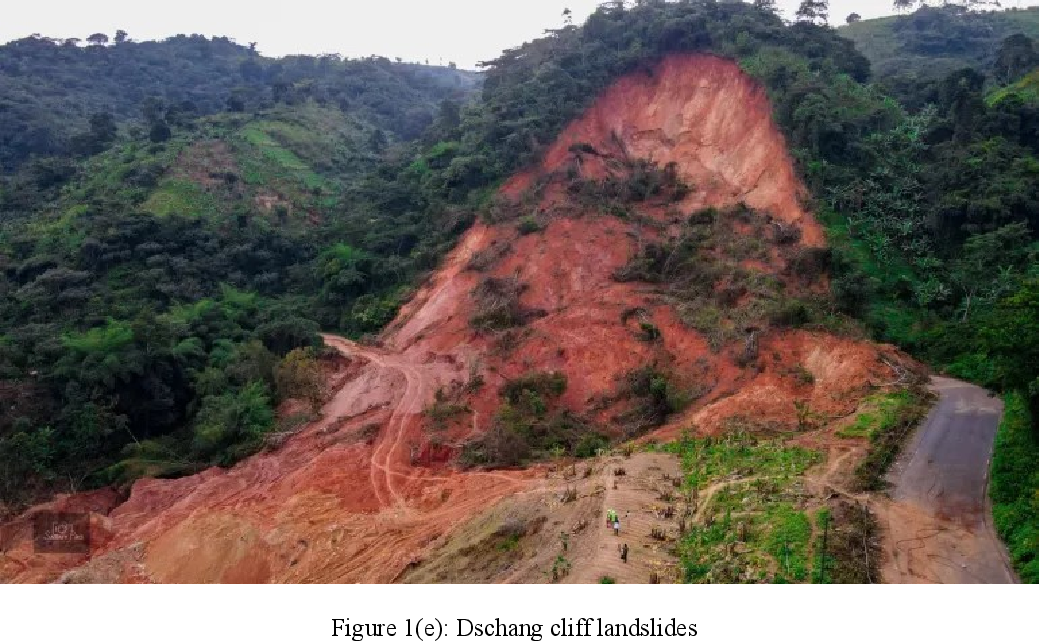,width=7.5cm}\\
         \psfig{file=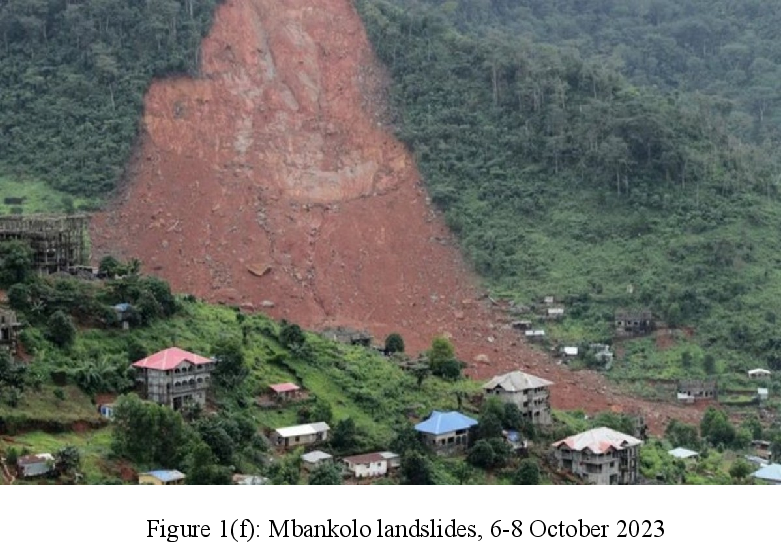,width=8cm} & \psfig{file=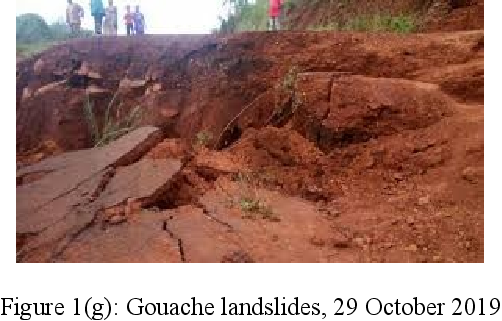,width=8cm}
         \end{tabular}
        \end{center}
        \caption{Geological structure deformation: Dschang cliff, Mbankolo and Gouache landslides in Cameroon.}
        \label{fig1}
        \end{figure}

    A landslide is the movement of a mass of debris, rock, or earth down a slope, and it is primarily caused by earthquakes, changes in water level/ground water, heavy rainfall, snowmelt, disturbance by human activities, stream erosion, and volcanic activity, whereas a tsunami is a series of massive waves caused by undersea volcanic eruptions or earthquakes. Some researchers \cite{dst} have shown that tectonic features may have an important influence in determining the location and size of landslides. Recent researches suggest that bedrock landslides have a significant role in landscape change, depending on their size and environment. However, the principal sources of geological structure deformations are earthquakes, gas extraction, and petroleum, as well as underground nuclear tests, including landslides. For more information, readers should check the works addressed in \cite{6jl,3lyl,3jl,8jl,5jl,10lyl,5lyl,4jl,7jl} and references therein.\\

   Earthquakes have become widespread in Cameroon (\textbf{Figure 1(b)}) and around the world in recent months, and roads in these areas have deteriorated \cite{echo} (\textbf{Figures 1(e)$\&$1(g)}). According to the World Health Organization's $2023$ research \cite{otherburied}, Cameroon's roads are infamously unsafe, resulting in about $3,000$ deaths annually. The causes of Cameroon landslides may include the following factors: loss of plant cover whose roots support the slope and slow down water erosion (loss of this cover is reflected in the colonization of previously vegetated areas), human activities mixed with slope action through earthworks (resulting in changes in soil cohesiveness that induce instability), site moisture content (the existence of a trickling watercourse), and sandy clay texture (effect on soil permeability and porosity). Furthermore, torrential rains caused by reliefs from climate change are the leading causes of landslides in this Central African country. The observers team and contributors provided the most recent updates on extreme weather, earthquakes, volcano eruptions, space weather, and other topics. From October $29$, $2019$, to November $5$-$7$, $2024$, heavy rains and a subsequent dam burst caused disastrous landslides and other calamities in three Cameroon cities. The avalanche impacted three buses and many road workers, resulting in over $82$ deaths and around $115$ people missing \cite{df,mn,43dead}.\\

   The death toll is expected to grow, as only four bodies have been found from the debris, with many more thought to remain buried \cite{otherburied}. Heavy rains produced a landslide on the roadway between Dschang in western Cameroon and Douala, the country's commercial metropolis. Three passenger buses and several road workers were detained, resulting in four fatalities and over $50$ missing \cite{pmutc}. Heavy rains damaged the soil, resulting in multiple calamities. Two landslides produced by excessive rainfall devastated Cameroon's west region during November $5$-$7$, $2024$, causing fatalities and damage \cite{echo}. \textbf{Figure 1(b)} shows that the Falaise community in Dschang division is experiencing road closures and highway blockages. The Dschang Cliff is located nine kilometers from Dschang town. It has an elevation of $1400m$ (latitude $5^{o}24'18"N$, longitude $10^{o}00'57"E$), a slope action of $7.6\%$, and an annual rainfall intensity of $2200mm/year$ \cite{dschtop}. The topography has north-east valleys divided by plains (\textbf{Figure 1(c)}). The first landslide occurred on the Dschang cliff road, an accident-prone stretch running through a mountainous area of the Central African nation. The second "large-scale" landslide occurred during the work and buried "three heavy machines involved in the clearing," three coaches with around $20$ seats each, five six-seater vehicles, several motorbikes, passers-by, and local residents. A second landslide near the first buried passengers, structures, and cars (\textbf{Figure 1(e)}). An unknown number of individuals were trapped under the debris, and police have reported at least four deaths and several injuries. Forecasted showers and thunderstorms may cause flooding and landslides in the region. Authorities may issue obligatory evacuation orders for flood- or landslide-prone areas. Flooding and landslides can cause disruptions to electricity and telecommunication services. Floodwaters and debris flows can make bridges, rail networks, and roadways unusable, limiting overland transport in affected areas. Ponding on road surfaces may result in hazardous driving conditions on regional routes. Authorities may temporarily restrict certain low-lying roads that become overwhelmed by flooding. In early September, a tractor-trailer transporting passengers plunged off a cliff into a ravine near the town, killing eight and wounding $62$ others, including eight children \cite{df}.\\

   The Mbankolo district, sometimes known as "Febe village," is located in Cameroon's capital, Yaounde. It is one of the city's largest quarters, with a population of almost three million. The chosen geographical coordinates are $3^{o}54'20"N$ to $11^{o}20'20"E$ for the hazard upstream at altitude $823.8m$ and $3^{o}54'36"N$ to $11^{o}29'27"E$ for the risk downstream at altitude $780m$. The topography is made up of N-E valleys divided by structural plains. It is an underdeveloped neighborhood with very uneven landscape and steep slopes. The Mfoundi river's source is located in a vast ravine in the district, which has been eroded by precipitation. At least $40$ years ago, an artificial lake was built on this site, and a broad spectrum of people resided there because of its attractiveness, which is regarded a tourist spot (\textbf{Figures 1(f)}). The steep relief of the site necessitated the construction of a support wall on the lake slope, resulting in minimal water flow. On October $6$-$8$, $2023$, heavy rains and a dam burst caused a deadly landslide in Mbankolo. Torrential rains caused a dam near an artificial lake to burst, resulting in a devastating avalanche. The lake spilled and flooded the hillside homes. The collapse of the slope destroyed around $30$ buildings consisting of wood, dried earth bricks, and metal sheets \cite{mn}. The demolition of shelters, latrines, and water sources put the affected and nearby communities at risk of water-borne diseases such as typhoid fever, malaria, cholera, and other injuries. It is difficult to provide a global appraisal of the damage due to multiple missing persons. However, the accident killed over $30$ individuals and injured at least seventeen. Red Cross workers estimated that about $700$ people, or approximately $110$ houses, were temporarily affected. The disaster left $27$ homes fully destroyed, $30$ partially destroyed, $27$ bodies discovered, and nearly $35$ people injured or missing. Approximately $40\%$ of individuals were hosted by neighbors who were not affected by the accident, while the rest sought refuge in a temporary shelter near the landslide site named Nkomkana. The affected families have lost practically everything, leaving them vulnerable. However, the Cameroon government has donated $60$ blankets, $30$ mattresses, $50$ towels, and $60$ boxes of soap to roughly $20$ survivors in public health institutions. We refer the readers to \cite{ifrc} and the references therein.\\

   Bafoussam, the headquarters of Cameroon's west region, is around $90.6km^{2}$ and has an elevation of $1323m$ above sea level. The west area is divided into eight divisions, each of which has its own subdivision, such as Bafoussam III. According to a $2005$ study by the Mifi Department of Statistics, Bafoussam had a population of approximately $348$ thousand people. Heavy landslides, like the one in Bafoussam III's Gouache area on October $29$, $2019$, are a common cause of severe disasters in the west region (\textbf{Figure 1(g)}). The Gouache catastrophe destroyed $20$ homes and killed $43$ people, leaving more than $30$ missing, $70$ injured, and several more displaced. The Cameroon government provided a USA $\$50,000$ contribution to affected persons who lost their property and needed food and shelter \cite{43dead}. However, this amount is insufficient for survivors to purchase land and rebuild their homes. According to Cameroon authorities, the damages affected at least $200$ people out of a population of around $1,000$. Torrential rains, combined with the area's steep topography, may have been the primary causes of the disaster. Furthermore, the environment is characterized by N-E valleys divided by mountains, whereas the surrounding reliefs of the disaster site are at an altitude of around $1532m$ (latitude $5^{o}24'18"N$, longitude $10^{o}00'57"E$). In $2019$, rainfall increased from January to September at the landslide site, reaching $592mm$ in September and $544mm$ in October, with the lowest recorded in December at $0.5mm$. However, an examination of rainfall data collected during the crisis period indicates that the greatest rainfall was observed on October $25$, $2019$ \cite{43dead}.\\

   In this paper we consider a tectonic deformation caused by landslides, petroleum/gas extraction and underground nuclear test and described by the three-dimensional system of elastodynamic model defined in \cite{4jl} as
    \begin{equation}\label{1}
    \left\{
      \begin{array}{ll}
        \nu w_{2t}(x,t)-(\lambda+\mu)\nabla(\nabla\cdot w(x,t))-\mu\nabla\cdot\overline{\nabla}w(x,t)=g(x,t), & \hbox{on $\Omega\times[0,\text{\,}T_{f}]$}\\
        \text{\,}\\
        \kappa(x,t)=\kappa^{0}(x)+\lambda(\nabla\cdot w(x,t))\mathcal{I}+2\mu\psi(w(x,t)), & \hbox{on $\Omega\times[0,\text{\,}T_{f}]$}
      \end{array}
    \right.
    \end{equation}
    with initial conditions
    \begin{equation}\label{2}
    w(x,0)=w_{0},\text{\,\,\,\,\,}w_{t}(x,0)=w_{1}(x),\text{\,\,\,\,\,}\kappa(x,0)=\kappa_{0}(x),\text{\,\,\,\,\,\,\,\,\,\,\,\,on\,\,\,\,}\Omega,
    \end{equation}
    and boundary condition
    \begin{equation}\label{3}
    w(x,t)=0,\text{\,\,\,\,\,\,\,\,\,\,\,\,\,\,\,\,\,\,\,\,\,\,\,\,\,\,on\,\,\,\,}\Gamma\times[0,\text{\,}T_{f}],
    \end{equation}
    where $w(x,t)$ is the displacement at time $t$ of the material particle which is located at position $x$ in the domain $\Omega$ (specifically, it allows to assess and predict damages), $\kappa$ denotes the symmetric stress tensor (symmetric matrix) that describes the internal efforts inside the material, $T_{f}$ represents the final time, $\Omega\subset\mathbb{R}^{3}$ is a bounded and connected domain, $\Gamma=\partial\Omega$ designates the boundary of $\Omega$. $w_{mt}$ means $\frac{\partial^{m}w}{\partial t^{m}}$, where $m$ is a nonnegative integer; $\nabla$, $\overline{\nabla}$, $\nabla\cdot$ and $\Delta$ denote the gradient, Jacobian, divergence and laplacian operators, respectively; $\nu$ is the elastic body density; $\lambda$ and $\mu$ are two physical parameters which may be defined as $\lambda=\frac{\alpha E}{(1+\alpha)(1-2\alpha)}$, $\mu=\frac{E}{2(1+\alpha)}$, $E$ and $\alpha$ are called elastic modulus and poisson's ratio, respectively; $\overline{\nabla}w$ denotes the jacobian matrix whereas $\nabla\cdot\nabla w$ is the vector in $\mathbb{R}^{3}$ with components $(\nabla\cdot\nabla w_{1},\nabla\cdot\nabla w_{2},\nabla\cdot\nabla w_{3})^{t}$ whenever $\nabla w=(\nabla w_{1}, \nabla w_{2},\nabla w_{3})^t$; $\psi(w)=\frac{1}{2}(\nabla w+(\nabla w)^t)$ is a symmetric matrix, so called (linearized) deformation tensor;  $\kappa_{0}$, $w_{0}$ and $w_{1}$ represent the initial conditions; $\mathcal{I}$ is the identity operator while $g(x,t)$ designates the source term (or driving force) defined as
   \begin{equation}
    g(x,t)=\left\{
             \begin{array}{ll}
               [1-(\pi g_{c}(t-t_{0}))^{2}]g_{0}(x)\exp(-(\pi g_{c}(t-t_{0}))^{2})[1,1,1]^{t}, & \hbox{for $x\in B(x_{c},r_{0})$} \\
               \text{\,}\\
               0, & \hbox{for $x\in\Omega\cup\Gamma\setminus B(x_{c},r_{0})$}
             \end{array}
           \right.
   \end{equation}
   for every $t\in[0,\text{\,}T_{f}]$, where $g_{c}$ designates a known constant (for example: rainfall intensity, slope action, site moisture content, temperature change in the medium, site sandy clay texture (soil permeability and porosity), altitude difference, etc...); $v^{t}$ indicates the transpose of a vector $v$ and $g_{0}\in H^{3}(\Omega)$ denotes a known function which vanishes on $\Gamma_{B(x_{c},r_{0})}$ along with its partial derivatives up to order $3$, where $B(x_{c},r_{0})=\{x\in\Omega,\text{\,}|x_{c}-x|<r_{0}\}$ means the opened ball centered at $x_{c}$ with radius $r_{0}$, $\Gamma_{B(x_{c},r_{0})}$ designates its boundary and $|\cdot|$ denotes the norm of the space $\mathbb{R}^{3}$. This assumption shows that $g\in[H^{2}(0,T_{f};\text{\,}H^{3}]^3$. It's worth mentioning that the opened ball $B(x_{c},r_{0})$ represents the focus where the disaster occurs. To guarantee the existence and uniqueness of a smooth solution to the considered problem $(\ref{1})$-$(\ref{3})$, we assume that the initial condition $w_{0}$ satisfies $w_{0}=0$, on $\Gamma$. The initial-boundary value problem $(\ref{1})$-$(\ref{3})$ can simulate geological structure deformation in Cameroon from October $2019$ to November $2024$. This includes the Gouache and Dschang cliff landslides ($29/10/2019$ and $5$-$7/11/2024$, respectively) in the west region, as well as the Mbankolo disaster in the center region on $8/10/2023$. In each damage site, the focus of landslides is designated by $B(x_{c},r_{0})$. Thus, the new combined Lax-Wendroff/interpolation approach with finite element method should be considered as a powerful tool to evaluate the landslides tragedy and to predict hazardous zones. However, the mathematical model of tectonic deformation described by the three-dimensional system of time-dependent partial differential equations $(\ref{1})$ lie in the set of ordinary/partial differential equations (ODEs/PDEs) \cite{1en,2en,3en,4en,5en} whose smooth solutions are too difficult to compute. Additionally, when the function $g_{0}$ and its first-order derivatives do not vanish on $\Gamma_{B(x_{c},r_{0})}$, the source term $g(x,t)$ presents some singular points. This shows that the initial-boundary value problem $(\ref{1})$-$(\ref{3})$ belongs to the class of complex models for which exact solutions are often hard to compute. For more information on similar equations, see \cite{6en,7en,8en,9en,10en} and the references therein. Efforts have been made in the literature to construct flexible, efficient, and accurate computational techniques for solving complex unsteady PDEs, such as the three-dimensional system of elastodynamic equations $(\ref{1})$. These techniques include higher-order finite difference/finite element schemes \cite{3jl,11en,18en,4jl,17en,5jl,12en,9jl,6jl,13en,7jl,14en,15en} and weak Galerkin finite element methods \cite{8jl,16en,11jl,12jl,13jl}. Although finite difference approaches remain popular, they lack "geometrical flexibility." This may be observed when dealing with varying coefficients of boundary conditions. Alternative computational approaches have been developed to address the limitations of finite element formulas when dealing with massive deformations. Because of their direct relationship to risk issues, these methodologies are commonly employed in landslide analysis, particularly when studying velocity, acceleration, and runout. In this work, we present a mixed Lax-Wendroff/interpolation methodology combined with the finite element method in a computed solution of the three-dimensional system $(\ref{1})$, subject to nonzero initial conditions $(\ref{2})$ and Dirichlet boundary condition $(\ref{3})$. To develop the desired scheme, time derivatives should be approximated with modified Lax-Wendroff and interpolation procedures, while spatial derivatives should be approximated using finite element methods. Under an appropriate time step constraint, the novel technique is stable, temporal second order accurate, and spatially third order convergent. Furthermore, the proposed approach to solve a three-dimensional elastodynamic model is faster and more efficient than a broad range of numerical systems \cite{4jl,12jl,6jl,7jl,lyl}. It is worth noting that the new approach addresses the issues identified by the numerical methods outlined above. This suggests that the constructed procedure should be considered as a powerful tool for providing useful data and related information (displacement $(w)$ and stress tensor $(\kappa)$) on some natural disasters in the world, especially in Cameroon. This would help communities to be informed about the hazard zones and allow people to avoid landslide damages. The highlights of the papers are the following.\\
   \begin{description}
     \item[(i)] A detailed description of a modified Lax-Wendroff/interpolation approach with finite element method for solving the three-dimensional system of geological structure deformation model $(\ref{1})$, subjects to initial-boundary conditions $(\ref{2})$-$(\ref{3})$.
     \item[(ii)] A deep analysis of the stability and error estimates of the proposed computational technique.
     \item[(iii)] Some numerical experiments which confirm the theoretical results and validate the accuracy, flexibility and efficiency of the new algorithm. The cases of landslides observed in some sites in Cameroon including the Dschang cliff, Mbankolo and Gouache are also considered and deeply discussed.
   \end{description}

   In the following we proceed as follows. In Section $\ref{sec2}$, we develop a modified Lax-Wendroff/interpolation scheme with finite element method in a computed solution of the three-dimensional system of tectonic deformation problem $(\ref{1})$, with initial conditions $(\ref{2})$ and boundary condition $(\ref{3})$. Section $\ref{sec3}$ deals with the stability analysis and the error estimates of the new computational technique while a wide set of numerical experiments which confirm the theoretical studies and consider the cases of landslides occurred in the west and center regions in Cameroon are presented and discussed in Section $\ref{sec4}$. In Section $\ref{sec5}$, we draw the general conclusions and indicate our future works.\\

     \section{Construction of the new approach}\label{sec2}
     This section considers a detailed description of the proposed combined explicit difference scheme with finite element method for solving a three-dimensional system of elastodynamic equation $(\ref{1})$ with appropriate initial and boundary conditions $(\ref{2})$ and $(\ref{3})$, respectively.\\

     Firstly, we defined the Sobolev spaces:
     \begin{equation*}
     \mathcal{W}=\{w=(w_{1},w_{2},w_{3})^{t}\in[W_{2}^{1}(\Omega)]^{3}:\text{\,}w|_{\Gamma}=0\};\text{\,\,}
     \mathcal{M}=\{\tau=(\tau_{ij})\in \mathcal{M}_{3}(L^{2}(\Omega)): \text{\,}\tau_{ij}=\tau_{ji},\text{\,\,}i,j=1,2,3\};
     \end{equation*}
     where $w^{t}$ designates the transpose  of the vector $w$,
     \begin{equation}\label{5}
     \mathcal{Q}=\mathcal{W}\times\mathcal{M};\text{\,\,\,} H^{4}(0,T_{f};\text{\,}H^{p})=\{v\in L^{2}(0,T_{f};\text{\,}H^{p}):\text{\,}\frac{\partial^{l}v}{\partial t^{l}}\in L^{2}(0,T_{f};\text{\,}H^{p}),\text{\,\,for\,\,}l=1,2,3,4\},
      \end{equation}
     where $\mathcal{M}_{3}(L^{2}(\Omega))$ is the space of $3\times3$-matrices with elements in $L^{2}(\Omega)$.\\

      The spaces $L^{2}(\Omega)$ and $[L^{2}(\Omega)]^{3}$ are equipped with the scalar products $\left(\cdot,\cdot\right)_{0}$ and $\left(\cdot,\cdot\right)_{\bar{0}}$, and norms $\|\cdot\|_{0}$ and $\|\cdot\|_{\bar{0}}$, respectively, whereas the spaces $W_{2}^{1}(\Omega)$, $H^{p}(\Omega)$ ($p\geq3$) and $\mathcal{W}$ are endowed with the norms $\|\cdot\|_{1}$, $\|\cdot\|_{p}$ and $\|\cdot\|_{\bar{1}}$, respectively. These inner products and norms are defined as:
      \begin{equation*}
     \left(u,v\right)_{0}=\int_{\Omega}uv d\Omega,\text{\,\,}\|u\|_{0}=\sqrt{\left(u,u\right)_{0}},\text{\,\,}\forall u,v\in L^{2}(\Omega);\text{\,\,\,}
     \left(w,z\right)_{\bar{0}}=\int_{\Omega}w^{t}z d\Omega,\text{\,\,}\|w\|_{\bar{0}}=\sqrt{\underset{i=1}{\overset{3}\sum}\|w_{i}\|_{0}^{2}},
     \end{equation*}
      \begin{equation*}
       \text{\,\,for\,\,} w=(w_{1},w_{2},w_{3})^{t},z=(z_{1},z_{2},z_{3})^{t}\in [L^{2}(\Omega)]^{3}; \text{\,\,}
       \|u\|_{1}=\sqrt{\|u\|_{0}^{2}+\|\nabla u\|_{\bar{0}}^{2}},\text{\,\,}\forall u\in W_{2}^{1}(\Omega);
     \end{equation*}
      \begin{equation}\label{6}
       \|u\|_{p}=\left(\underset{|m|=0}{\overset{p}\sum}\|D_{x}^{m}u\|_{0}^{2}\right)^{\frac{1}{2}},\text{\,\,}\forall u\in H^{p}(\Omega);\text{\,\,\,}
       \|w\|_{\bar{1}}=\sqrt{\underset{i=1}{\overset{3}\sum}\left\|w_{i}\right\|_{1}^{2}},\text{\,\,for\,\,}w=(w_{1},w_{2},w_{3})^{t}\in\mathcal{W}.
      \end{equation}
       Here, for $n=(n_{1},n_{2},n_{3})\in\mathbb{N}^{3}$ and $x=(x_{1},x_{2},x_{3})\in\mathbb{R}^{3}$, it holds: $|n|=n_{1}+n_{2}+n_{3}$, $\partial x^{|n|}=\partial x_{1}^{n_{1}}\partial x_{2}^{n_{2}}\partial x_{3}^{n_{3}}$, $D_{x}^{n}v=\frac{\partial^{|n|}v}{\partial x^{|n|}}$ and $D_{x}^{0}v:=v$. The space $\mathcal{M}$ is equipped with the scalar product $\left(\cdot,\cdot\right)_{*}$ and the norm $\|\cdot\|_{*}$, while the spaces $H^{4}(0,T_{f};\text{\,}H^{p})$ and $\mathcal{Q}=\mathcal{W}\times\mathcal{M}$ are endowed with the norms $\||\cdot|\|_{p,4}$ (also $\||\cdot|\|_{p,\infty}$) and $\||\cdot|\|_{\bar{1},*}$, respectively.
      \begin{equation*}
        \left(\tau_{1},\tau_{2}\right)_{*}=\underset{i=1}{\overset{3}\sum}\underset{j=1}{\overset{3}\sum}\int_{\Omega}(\tau_{1})_{ij}(\tau_{2})_{ij}d\Omega;\text{\,\,\,} \|\tau_{1}\|_{*}=\sqrt{\left(\tau_{1},\tau_{1}\right)_{*}},\text{\,\,for\,\,}\tau_{1}=(\tau_{1})_{ij},\tau_{2}=(\tau_{2})_{ij}\in\mathcal{M};
     \end{equation*}
      \begin{equation*}
      \||v|\|_{p,4}=\left(\underset{l=0}{\overset{4}\sum}\int_{0}^{T_{f}}\left\|\frac{\partial^{l}v(t)}{\partial t^{l}}\right\|_{p}^{2}dt\right)^{\frac{1}{2}},\text{\,\,\,} \||v|\|_{p,\infty}=\underset{0\leq t\leq T_{f}}{\max}\|v(t)\|_{p},\text{\,\,\,}\text{\,\,}v\in H^{4}(0,T_{f};\text{\,}H^{p});
      \end{equation*}
      \begin{equation}\label{7}
      \||(w,\tau)|\|_{\bar{1},*}=\sqrt{\|w\|_{\bar{1}}^{2}+\|\tau\|_{*}^{2}},\text{\,\,\,\,\,} (w,\tau)\in\mathcal{W}\times\mathcal{M}.
      \end{equation}

      We introduce the bilinear operator $A(\cdot,\cdot)$ defined as
      \begin{equation}\label{8}
      A(w,z)=(\lambda+\mu)\left(\overline{\nabla}w,\overline{\nabla}z\right)_{*}+\mu\left(\nabla\cdot w,\nabla\cdot z\right)_{0},\text{\,\,\,for\,\,\,}w,z\in\mathcal{W},
      \end{equation}
      where $\overline{\nabla}w$ denotes the Jacobian matrix of a vector function $w$.

      Let $N$ be a positive integer. Set $k=\frac{T_{f}}{N}$ be the time step and $\mathcal{T}_{k}=\{t_{n}=nk,\text{\,\,}0\leq n\leq N\}$ be a regular partition of the interval $[0,\text{\,}T_{f}]$. Let $\Pi_{h}$ be a finite element method (FEM) triangulation of the domain $\overline{\Omega}=\Omega\cup\Gamma$, which consists of tetrahedra $T$, with maximum tetrahedron diameter denoted by $"h"$. Specifically, $h$ means the step size of a spatial mesh of the computational domain $\overline{\Omega}$, whereas $\Pi_{h}$ satisfies the following assumptions: (i) the interior of any tetrahedron is nonempty; (ii) the intersection of the interior of two different tetrahedra is the empty set while the intersection of two tetrahedra is either the empty set or a common face/edge; (iii) the triangulation $\Pi_{h}$ is regular and the triangulation $\Pi_{\Gamma,h}$ induced on the boundary $\Gamma=\partial\Omega$ is quasi-uniform.\\

       Consider the finite element spaces $\mathcal{W}_{h}$ and $\mathcal{M}_{h}$ approximating the solution of the three-dimensional system of tectonic deformation $(\ref{1})$, subject to initial-boundary conditions $(\ref{2})$-$(\ref{3})$
       \begin{equation}\label{9}
      \mathcal{W}_{h}=\{w_{h}(t)\in\mathcal{W},\text{\,}w_{h}(t)|_{T}\in[\mathcal{P}_{4}(T)]^{3},\text{\,}\forall T\in\Pi_{h},\text{\,\,}\forall t\in[0,\text{\,}T_{f}]\},
      \end{equation}
      where $\mathcal{P}_{4}(T)$ is the set of polynomials defined on $T$ with degree less than or equal $4$;
      \begin{equation}\label{10}
      \mathcal{M}_{h}=\{\tau_{h}(t):=\tau(w_{h}(t))\in\mathcal{M},\text{\,}w_{h}(t)\in\mathcal{W}_{h},\text{\,\,for\,\,}0\leq t\leq T_{f}\}.
      \end{equation}

      \begin{remark}
      It's worth mentioning that the use of a computed displacement $w_{h}(t)$ obtained from the first equation in system $(\ref{1})$ together with the second equation in $(\ref{1})$, the initial conditions $(\ref{2})$ and performing direct calculations provide the corresponding symmetric stress tensor $\tau_{h}(t):=\tau(w_{h}(t))$.
      \end{remark}

      We recall the following Green formula for a vector $v\in \mathcal{W}$ and a stress tensor $\tau\in\mathcal{M}(L^{2}(\Omega))$
      \begin{equation}\label{11}
      \left(\nabla\cdot\tau,v\right)_{\bar{0}}=\int_{\Gamma}(\tau\vec{z})^{t}vd\Gamma-\left(\tau,\overline{\nabla}v\right)_{*},
      \end{equation}
      where $\vec{z}$ represents the unit outward normal vector on $\Gamma$, $w^{t}$ means the transpose of a vector $w$, $"\nabla\cdot"$ denotes the divergence operator, and $\overline{\nabla}v$ is the Jacobian matrix of the vector $v$. These operators are defined as
      \begin{equation}\label{12}
      (\nabla\cdot\tau)_{j}=\underset{i=1}{\overset{3}\sum}\frac{\partial\tau_{ij}}{\partial x_{i}},\text{\,\,\,}1\leq j\leq3\text{\,\,\,\,and\,\,\,}\overline{\nabla}v=\left(\frac{\partial v_{i}}{\partial x_{j}}\right)_{ij},\text{\,\,\,}1\leq i,j\leq3.
      \end{equation}

      For the convenience of writing, we set $w:=w(t)\in\mathcal{W}$ and $\tau:=\tau(t)\in\mathcal{M}$, for every $t\in[0,\text{\,}T_{f}]$. The application of the Taylor series expansion for the function $w$ with time step $k$ at the discrete points $t_{n-1}$ and $t_{n}$ using forward difference formula yields
      \begin{equation*}
      w^{n+1}=w^{n}+kw^{n}_{t}+\frac{k^{2}}{2}w^{n}_{2t}+\frac{k^{3}}{6}w^{n}_{3t}+\bar{O}(k^{4});\text{\,\,\,}
      w^{n}=w^{n-1}+kw^{n-1}_{t}+\frac{k^{2}}{2}w^{n-1}_{2t}+\frac{k^{3}}{6}w^{n-1}_{3t}+\bar{O}(k^{4}),
      \end{equation*}
      where $\bar{O}(k^{4})=(O(k^{4}),O(k^{4}),O(k^{4}))^{t}$. Subtracting the second equation from the first one and rearranging terms to get
      \begin{equation}\label{15}
      w^{n+1}-2w^{n}+w^{n-1}=k(w^{n}_{t}-w^{n-1}_{t})+\frac{k^{2}}{2}(w^{n}_{2t}-w^{n-1}_{2t})+\frac{k^{3}}{6}(w^{n}_{3t}-w^{n-1}_{3t})+\bar{O}(k^{4}).
      \end{equation}
      Integrating the first equation in system $(\ref{1})$ on the interval $[t_{n-1},\text{\,}t_{n}]$, multiplying the obtained equation by $\frac{1}{\nu}$ and rearranging terms, this provides
      \begin{equation}\label{16}
      w^{n}_{t}-w^{n-1}_{t}=\frac{1}{\nu}\int_{t_{n-1}}^{t_{n}}[(\lambda+\mu)\nabla(\nabla\cdot w)+\mu\nabla\cdot\overline{\nabla}w+g]dt.
      \end{equation}
      Additionally, applying the first equation in system $(\ref{1})$ at the discrete times $t_{n-1}$ and $t_{n}$ and rearranging terms yield
      \begin{equation}\label{17}
      w^{n}_{2t}=\frac{1}{\nu}[(\lambda+\mu)\nabla(\nabla\cdot w^{n})+\mu\nabla\cdot\overline{\nabla}w^{n}+g^{n}]\text{\,\,\,and\,\,\,}
      w^{n-1}_{2t}=\frac{1}{\nu}[(\lambda+\mu)\nabla(\nabla\cdot w^{n-1})+\mu\nabla\cdot\overline{\nabla}w^{n-1}+g^{n-1}].
      \end{equation}
      In addition, the use of Mean Value Theorem gives
      \begin{equation}\label{18}
      w^{n}_{3t}-w^{n-1}_{3t}=\bar{O}(k).
      \end{equation}
      Substituting equations $(\ref{16})$, $(\ref{17})$ and $(\ref{18})$ into equation $(\ref{15})$ to obtain
      \begin{equation*}
      w^{n+1}-2w^{n}+w^{n-1}=\frac{k}{\nu}\int_{t_{n-1}}^{t_{n}}[(\lambda+\mu)\nabla(\nabla\cdot w)+\mu\nabla\cdot\overline{\nabla}w+g]dt+
      \end{equation*}
      \begin{equation}\label{19}
      \frac{k^{2}}{2\nu}[(\lambda+\mu)\nabla(\nabla\cdot(w^{n}-w^{n-1})+\mu\nabla\cdot\overline{\nabla}(w^{n}-w^{n-1})+g^{n}-g^{n-1}]+\bar{O}(k^{4}).
      \end{equation}
      The approximation of $w$ at the discrete points $t_{n-1}$ and $t_{n}$ using interpolation technique gives
      \begin{equation}\label{20}
        w(t)=\frac{t-t_{n-1}}{k}w^{n}-\frac{t-t_{n}}{k}w^{n-1}+\frac{1}{2}(t-t_{n-1})(t-t_{n})w_{2t}(\epsilon_{n}(t)),
       \end{equation}
       where $\epsilon_{n}(t)$ is between the minimum and maximum of $t_{n-1}$, $t_{n}$ and $t$.\\

       Combining equations $(\ref{19})$ and $(\ref{20})$, it is easy to observe that
       \begin{equation*}
      w^{n+1}-2w^{n}+w^{n-1}=\frac{k}{\nu}\int_{t_{n-1}}^{t_{n}}\left\{(\lambda+\mu)[\frac{t-t_{n-1}}{k}\nabla(\nabla\cdot w^{n})-\frac{t-t_{n}}{k}\nabla(\nabla\cdot w^{n-1})]+\mu[\frac{t-t_{n-1}}{k}\nabla\cdot\overline{\nabla}w^{n}-\right.
      \end{equation*}
       \begin{equation*}
      \left.\frac{t-t_{n}}{k}\nabla\cdot\overline{\nabla}w^{n-1}]+g\right\}dt+\frac{k^{2}}{2\nu}[(\lambda+\mu)\nabla(\nabla\cdot(w^{n}-w^{n-1})+
      \mu\nabla\cdot\overline{\nabla}(w^{n}-w^{n-1})+g^{n}-g^{n-1}]+
      \end{equation*}
      \begin{equation}\label{21}
      \frac{k}{2\nu}\int_{t_{n-1}}^{t_{n}}(t-t_{n-1})(t-t_{n})[(\lambda+\mu)\nabla(\nabla\cdot w_{2t}(\epsilon_{n}(t)))+\mu\nabla\cdot\overline{\nabla}(w_{2t}
      (\epsilon_{n}(t)))]dt+\bar{O}(k^{4}).
      \end{equation}
      Since $w\in[H^{4}(0,T_{f};\text{\,}W_{2}^{5})]^{3}$, so $\nabla(\nabla\cdot w_{2t})$ and $\nabla\cdot\overline{\nabla}w_{2t}$ lie in $[\mathcal{C}(0,T_{f};\text{\,}W_{2}^{3})]^{3}$, where $W_{2}^{3}(\Omega)=H^{3}(\Omega)$. Further, the polynomial function $t\mapsto(t-t_{n-1})(t-t_{n})$, does not change sign on $[t_{n-1},\text{\,}t_{n}]$, thus it follows from the Weighted Mean Value Theorem \cite{bookmat333} that there is $\epsilon\in(t_{n-1},\text{\,}t_{n})$, so that
      \begin{equation*}
      \int_{t_{n-1}}^{t_{n}}(t-t_{n-1})(t-t_{n})[(\lambda+\mu)\nabla(\nabla\cdot w_{2t}(\epsilon_{n}(t)))+\mu\nabla\cdot\overline{\nabla}(w_{2t}
      (\epsilon_{n}(t)))]dt=[(\lambda+\mu)\nabla(\nabla\cdot w_{2t}(\epsilon))+
      \end{equation*}
      \begin{equation}\label{21a}
      \mu\nabla\cdot\overline{\nabla}(w_{2t}(\epsilon))]\int_{t_{n-1}}^{t_{n}}(t-t_{n-1})(t-t_{n})dt.
      \end{equation}
      But, performing straightforward computations to get $\int_{t_{n-1}}^{t_{n}}(t-t_{n-1})(t-t_{n})dt=-\frac{k^{3}}{12}$. Utilizing this, equation $(\ref{21a})$ becomes
      \begin{equation*}
      \int_{t_{n-1}}^{t_{n}}(t-t_{n-1})(t-t_{n})[(\lambda+\mu)\nabla(\nabla\cdot w_{2t}(\epsilon_{n}(t)))+\mu\nabla\cdot\overline{\nabla}(w_{2t}
      (\epsilon_{n}(t)))]dt=-\frac{k^{3}}{12}[(\lambda+\mu)\nabla(\nabla\cdot w_{2t}(\epsilon))+
      \end{equation*}
      \begin{equation}\label{22}
      \mu\nabla\cdot\overline{\nabla}(w_{2t}(\epsilon))]=\overline{O}(k^{3}).
      \end{equation}
      Substituting approximation $(\ref{22})$ into $(\ref{21})$ results in
      \begin{equation*}
      w^{n+1}-2w^{n}+w^{n-1}=\frac{k}{\nu}\left\{[(\lambda+\mu)\nabla(\nabla\cdot w^{n})+\mu\nabla\cdot\overline{\nabla}w^{n}]\int_{t_{n-1}}^{t_{n}}\frac{t-t_{n-1}}{k}dt
      -[(\lambda+\mu)\nabla(\nabla\cdot w^{n-1})+\right.
      \end{equation*}
       \begin{equation}\label{22a}
      \left.\mu\nabla\cdot\overline{\nabla}w^{n-1}]\int_{t_{n-1}}^{t_{n}}\frac{t-t_{n}}{k}dt+\int_{t_{n-1}}^{t_{n}}fdt\right\}dt+\frac{k^{2}}{2\nu}[(\lambda+\mu)
      \nabla(\nabla\cdot(w^{n}-w^{n-1})+\mu\nabla\cdot\overline{\nabla}(w^{n}-w^{n-1})+g^{n}-g^{n-1}]+\bar{O}(k^{4}).
      \end{equation}
      Performing direct computations, we obtain: $\int_{t_{n-1}}^{t_{n}}\frac{t-t_{n}}{k}dt=\frac{1}{2k}[(t-t_{n})^{2}]_{t_{n-1}}^{t_{n}}=-\frac{k}{2}$ and $\int_{t_{n-1}}^{t_{n}}\frac{t-t_{n-1}}{k}dt=\frac{1}{2k}[(t-t_{n-1})^{2}]_{t_{n-1}}^{t_{n}}=\frac{k}{2}$. This fact combined with equation $(\ref{22a})$ yield
      \begin{equation}\label{23}
      w^{n+1}-2w^{n}+w^{n-1}=\frac{k^{2}}{\nu}\left[(\lambda+\mu)\nabla(\nabla\cdot w^{n})+\mu\nabla\cdot\overline{\nabla}w^{n}\right]+
      \frac{k^{2}}{2\nu}(g^{n}-g^{n-1})+\frac{k}{\nu}\int_{t_{n-1}}^{t_{n}}gdt+\bar{O}(k^{4}).
      \end{equation}

      For every $v:=v(t)\in \mathcal{W}$, where $t\in[o,\text{\,}T_{f}]$, multiplying both sides of equation $(\ref{23})$ by $v$ and utilizing the scalar product, $\left(\cdot,\cdot\right)_{\bar{0}}$, defined in relation $(\ref{6})$, we obtain
      \begin{equation*}
      \left(w^{n+1}-2w^{n}+w^{n-1},v\right)_{\bar{0}}=\frac{k^{2}}{\nu}\left[(\lambda+\mu)\left(\nabla(\nabla\cdot w^{n}),v\right)_{\bar{0}}+
      \mu\left(\nabla\cdot\overline{\nabla}w^{n},v\right)_{\bar{0}}\right]+\frac{k^{2}}{2\nu}\left(g^{n}-g^{n-1},v\right)_{\bar{0}}+
      \end{equation*}
      \begin{equation}\label{24}
      \frac{k}{\nu}\left(\int_{t_{n-1}}^{t_{n}}gdt,v\right)_{\bar{0}}+\left(\bar{O}(k^{4}),v\right)_{\bar{0}}.
      \end{equation}

      Since $v,w\in \mathcal{W}$, so $\overline{\nabla}(w^{n}+w^{n-1})$ and $\overline{\nabla}v$ are two matrices so called Jacobian matrices and $v=0$ on $\Gamma$, thus it follows from the Green formula given by equation $(\ref{11})$ that
      \begin{equation}\label{25}
      \left(\nabla\cdot\overline{\nabla}w^{n},v\right)_{\bar{0}}=-\left(\overline{\nabla}w^{n},\overline{\nabla}v\right)_{*}.
      \end{equation}
      In addition, using the definition of the scalar products $\left(\cdot,\cdot\right)_{\bar{0}}$ and $\left(\cdot,\cdot\right)_{0}$ given in relation $(\ref{6})$ and the integration by parts to get
      \begin{equation*}
      \left(\nabla(\nabla\cdot w^{n}),v\right)_{\bar{0}}=\underset{i=1}{\overset{3}\sum}\int_{\Omega}\left(\frac{\partial}{\partial x_{i}}(\nabla\cdot w^{n})\right)v_{i}d\Omega
      =\underset{i=1}{\overset{3}\sum}\left[\int_{\Gamma}(\nabla\cdot w^{n})v_{i}\hat{z}_{i}d\Gamma-\int_{\Omega}(\nabla\cdot w^{n})\frac{\partial v_{i}}{\partial x_{i}}d\Omega\right]=
      \end{equation*}
      \begin{equation}\label{26}
      -\int_{\Omega}\nabla\cdot w^{n}\underset{i=1}{\overset{3}\sum}\frac{\partial v_{i}}{\partial x_{i}}d\Omega=-\int_{\Omega}(\nabla\cdot w^{n})(\nabla\cdot v)d\Omega=
      -\left(\nabla\cdot w^{n},\nabla\cdot v\right)_{0},
      \end{equation}
      where $\hat{z}_{i}$ denotes the ith component of the unit outward normal vector $\vec{z}$. Plugging equations $(\ref{24})$-$(\ref{26})$ and utilizing the bilinear operator $A$ defined by equation $(\ref{8})$, this gives
      \begin{equation}\label{27}
      \left(w^{n+1}-2w^{n}+w^{n-1},v\right)_{\bar{0}}=-\frac{k^{2}}{\nu}A(w^{n},v)+\frac{k^{2}}{2\nu}\left(g^{n}-g^{n-1},v\right)_{\bar{0}}+
      \frac{k}{\nu}\left(\int_{t_{n-1}}^{t_{n}}gdt,v\right)_{\bar{0}}+\left(\bar{O}(k^{4}),v\right)_{\bar{0}}.
      \end{equation}

      Furthermore, applying the second equation in system $(\ref{1})$ at the discrete time $t_{n}$, multiplying the obtained equation by $\tau\in\mathcal{M}$ and using the scalar product $\left(\cdot,\cdot\right)_{*}$ defined in relation $(\ref{7})$, we obtain
      \begin{equation}\label{27a}
      \left(\kappa^{n},\tau\right)_{*}=\left(\kappa^{0},\tau\right)_{*}+\lambda\left((\nabla\cdot w^{n})\mathcal{I},\tau\right)_{*}+2\mu\left(\psi(w^{n}),\tau\right)_{*}.
      \end{equation}

      Finally, neglecting the error term $\left(\bar{O}(k^{4}),v\right)_{\bar{0}}$ in equation $(\ref{27})$, replacing the exact solution $w(t)\in\mathcal{W}$ with the approximate one $w_{h}(t)\in\mathcal{W}_{h}$, for $t\in[0,\text{\,}T_{f}]$, rearranging terms and using equation $(\ref{27a})$ to get the new algorithm, that is, given $w_{h}^{n-1},w_{h}^{n}\in\mathcal{W}_{h}$ and $\kappa_{h}^{n}\in\mathcal{M}_{h}$, find $(w_{h}^{n+1},\kappa_{h}^{n+1})\in\mathcal{Q}_{h}=\mathcal{W}_{h}\times\mathcal{M}_{h}$, for $n=1,2,...,N-1$, so that
      \begin{equation}\label{28}
      \left(w_{h}^{n+1}-2w_{h}^{n}+w_{h}^{n-1},v\right)_{\bar{0}}=-\frac{k^{2}}{\nu}A(w_{h}^{n},v)+\frac{k^{2}}{2\nu}\left(g^{n}-g^{n-1},v\right)_{\bar{0}}+
      \frac{k}{\nu}\left(\int_{t_{n-1}}^{t_{n}}gdt,v\right)_{\bar{0}},\text{\,\,\,\,}\forall v\in \mathcal{W},
      \end{equation}
      \begin{equation}\label{28a}
      \left(\kappa_{h}^{n+1},\tau\right)_{*}=\left(\kappa^{0},\tau\right)_{*}+\lambda\left((\nabla\cdot w_{h}^{n+1})\mathcal{I},\tau\right)_{*}+
      2\mu\left(\psi(w_{h}^{n+1}),\tau\right)_{*},\text{\,\,\,\,}\forall \tau\in \mathcal{M},
      \end{equation}
      where $\mathcal{W}$ and $\mathcal{M}$ are defined in equation $(\ref{5})$. Subject to initial conditions
      \begin{equation}\label{29}
      w_{h}^{0}=w_{0},\text{\,\,\,\,}w_{h}^{1}=\widetilde{w}_{1},\text{\,\,\,\,}\kappa_{h}^{0}=\kappa^{0},\text{\,\,\,\,}\kappa_{h}^{1}=\kappa^{0}+
      \lambda(\nabla\cdot w_{h}^{1})\mathcal{I}+2\mu\psi(w_{h}^{1}),\text{\,\,\,\,on\,\,\,\,\,}\overline{\Omega}=\Omega\cup\Gamma,
      \end{equation}
      and boundary condition
      \begin{equation}\label{30}
      w_{h}^{n}=0,\text{\,\,\,\,},\kappa_{h}^{n}=\kappa^{0},\text{\,\,\,\,for\,\,\,\,}n=0,1,...,N,\text{\,\,\,\,}\text{\,\,\,\,on\,\,\,\,\,}\Gamma.
      \end{equation}
      Here $\widetilde{w}_{1}$ is the second order approximation of $w^{1}$ obtained using the second equation in relation $(\ref{2})$ and the Taylor expansion of order two. That is,
      \begin{equation}\label{31}
      \widetilde{w}_{1}=w_{0}+kw_{t}^{0}=w_{0}+kw_{1}.
      \end{equation}
      The local truncation error corresponding to this approximation is $\frac{k^{2}}{2}w_{2t}^{1}(\epsilon(t))$. Performing direct calculations, it is not hard to observe that
      \begin{equation}\label{32}
      \|w^{1}-\widetilde{w}_{1}\|_{\bar{0}}\leq \frac{k^{2}}{2}\underset{0\leq \bar{t}\leq T_{f}}{\max}\|w_{2t}(\bar{t})\|_{\bar{0}}=\||w_{2t}|\|_{\bar{0},\infty}.
      \end{equation}

      Now, we assume that the generalized sequence $\{\mathcal{W}_{h}\}_{h>0}$, of finite element subspaces approximating $\mathcal{W}$ with order $O(h)$ are used in the fluid region. Thus, the corresponding inverse estimate is defined in \cite{ch} as
      \begin{equation}\label{33}
      \left\|\frac{\partial w}{\partial x_{i}}\right\|_{\bar{0}}\leq C_{pf}h^{-1}\|w\|_{\bar{0}},\text{\,\,\,for\,\,\,}i=1,2,3,\text{\,\,\,\,\,}\forall w\in\mathcal{W}_{h},
      \end{equation}
      where $C_{pf}$ is a positive constant which is independent of the mesh space $h$ and the time step $k$. We will use the following  Poincar\'{e}-Friedrich inequality,
      \begin{equation}\label{34}
      \|u\|_{0}\leq C_{\Omega}\|\nabla u\|_{\bar{0}},\text{\,\,\,\,\,\,}\text{\,\,\,\,\,}\forall u\in W^{1}_{2}(\Omega).
      \end{equation}

      We should analyze the stability together with the error estimates of the developed combined Lax-Wendroff/interpolation approach with finite element method $(\ref{28})$-$(\ref{31})$, under the following time step restriction
      \begin{equation}\label{sr}
      \frac{k}{h}\leq C_{sr},\text{\,\,\,\,\,\,where\,\,\,\,\,}0<C_{sr}<\sqrt{2\nu}.
      \end{equation}

      \begin{remark}\label{r1}
      \begin{itemize}
        \item It's important to mention that the left hand side of inequality $(\ref{sr})$ depends on the space step $h$ so that in the usual terminology of computed solution of unsteady PDEs, the proposed explicit computational technique is stable under a suitable time step limitation. The given time step restriction $(\ref{sr})$ is more attractive and it's well known in the literature as Courant Friedrich-Lewy condition for stability of explicit numerical schemes applied to linear hyperbolic PDEs.
        \item Since the second equation in system $(\ref{1})$ indicates that the symmetric stress tensor $"\kappa"$ is not related to a differential equation and should be directly computed as a function of the displacement $w$, both stability and error estimates of the constructed algorithm $(\ref{28})$-$(\ref{31})$ must be restricted to equations satisfied by the displacement $w_{h}$.
      \end{itemize}
      \end{remark}

      To analyze the stability together with the error estimates of the proposed approach $(\ref{28})$-$(\ref{31})$, we suppose that the analytical solution satisfies the following regularity condition: $w\in[H^{4}(0,T_{f};\text{\,}H^{5})]^{3}$, which is true because the driving force $"g"$ falls in the Sobolev space $[H^{2}(0,T_{f};\text{\,}H^{3})]^{3}$. That is, there is a positive constant $\widetilde{C}$, so that
      \begin{equation}\label{35}
      \||w|\|_{\overline{5},4}\leq \widetilde{C}.
      \end{equation}

      The following Lemmas $\ref{l1}$ $\&$ $\ref{l2}$ are important in the proof of the main results (namely Theorems $\ref{t1}$ $\&$ $\ref{t2}$) of this paper.

     \begin{lemma}\label{l1}
     For every $w,v\in\mathcal{W}$, the bilinear form $A(\cdot,\cdot)$ defined by equation $(\ref{8})$ satisfies
     \begin{equation}\label{36}
     A(w,v)\leq (\lambda+4\mu)\|w\|_{\bar{1}}\|v\|_{\bar{1}}\text{\,\,\,\,\,and\,\,\,\,\,}A(w,w)\geq \frac{\lambda+\mu}{2C_{\Omega}}\min\{1,C_{\Omega}\}\|w\|_{\bar{1}}^{2}.
     \end{equation}
     \end{lemma}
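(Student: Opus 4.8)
The statement bundles two independent estimates for the bilinear form $A$, and I would treat them separately: the continuity bound rests only on the Cauchy--Schwarz inequality together with elementary comparisons between the norms $\|\overline{\nabla}w\|_{*}$, $\|\nabla\cdot w\|_{0}$ and $\|w\|_{\bar{1}}$, while the coercivity bound rests on discarding a nonnegative term and invoking the Poincar\'{e}--Friedrich inequality (\ref{34}).

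For the continuity bound, the plan is first to record the two norm comparisons that follow directly from the definitions in (\ref{6}). Since $\|\overline{\nabla}w\|_{*}^{2}=\sum_{i=1}^{3}\|\nabla w_{i}\|_{\bar{0}}^{2}\leq\sum_{i=1}^{3}\|w_{i}\|_{1}^{2}=\|w\|_{\bar{1}}^{2}$, one gets $\|\overline{\nabla}w\|_{*}\leq\|w\|_{\bar{1}}$. For the divergence term I would apply the pointwise Cauchy--Schwarz inequality $\bigl(\sum_{i}\partial w_{i}/\partial x_{i}\bigr)^{2}\leq 3\sum_{i}(\partial w_{i}/\partial x_{i})^{2}$ under the integral, which gives $\|\nabla\cdot w\|_{0}^{2}\leq 3\|\overline{\nabla}w\|_{*}^{2}\leq 3\|w\|_{\bar{1}}^{2}$. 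Applying Cauchy--Schwarz to each of the two scalar products in (\ref{8}) and inserting these bounds then yields $A(w,v)\leq(\lambda+\mu)\|w\|_{\bar{1}}\|v\|_{\bar{1}}+3\mu\|w\|_{\bar{1}}\|v\|_{\bar{1}}=(\lambda+4\mu)\|w\|_{\bar{1}}\|v\|_{\bar{1}}$, which is exactly the first inequality.

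For the coercivity bound I would start from $A(w,w)=(\lambda+\mu)\|\overline{\nabla}w\|_{*}^{2}+\mu\|\nabla\cdot w\|_{0}^{2}$ and discard the second, nonnegative term to obtain $A(w,w)\geq(\lambda+\mu)\|\overline{\nabla}w\|_{*}^{2}$. It then remains to bound $\|\overline{\nabla}w\|_{*}^{2}$ from below by a multiple of $\|w\|_{\bar{1}}^{2}=\|w\|_{\bar{0}}^{2}+\|\overline{\nabla}w\|_{*}^{2}$. Here I would split the gradient term, $\|\overline{\nabla}w\|_{*}^{2}=\tfrac12\|\overline{\nabla}w\|_{*}^{2}+\tfrac12\|\overline{\nabla}w\|_{*}^{2}$, keep the first half, and apply the Poincar\'{e}--Friedrich inequality (\ref{34}) componentwise to the second half, namely $\|\overline{\nabla}w\|_{*}^{2}=\sum_{i}\|\nabla w_{i}\|_{\bar{0}}^{2}\geq C_{\Omega}^{-2}\sum_{i}\|w_{i}\|_{0}^{2}=C_{\Omega}^{-2}\|w\|_{\bar{0}}^{2}$. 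Combining the two halves and factoring out the smaller of the two coefficients produces a lower bound of the form $\tfrac12\min\{1,C_{\Omega}^{-2}\}\,\|w\|_{\bar{1}}^{2}$, giving coercivity with a constant of the stated type.

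The routine parts are the two applications of Cauchy--Schwarz and the discarding of the divergence term; the delicate point is the bookkeeping of the coercivity constant. In particular, reconciling the lower bound $\tfrac{\lambda+\mu}{2}\min\{1,C_{\Omega}^{-2}\}\,\|w\|_{\bar{1}}^{2}$ that the split-and-Poincar\'{e} argument naturally yields with the constant $\frac{\lambda+\mu}{2C_{\Omega}}\min\{1,C_{\Omega}\}$ written in (\ref{36}) requires care about how the Poincar\'{e}--Friedrich constant $C_{\Omega}$ enters once its inequality is squared, and about how the $\min$ arises when balancing the retained gradient half against the recovered $L^{2}$ contribution. I would also verify that the factor $3$ in the divergence estimate---coming from the three spatial components in dimension three---is precisely what upgrades $\mu$ to $4\mu$ in the continuity bound, so that no constant is lost there.
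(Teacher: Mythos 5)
Your proposal follows essentially the same route as the paper: Cauchy--Schwarz plus the factor-$3$ divergence estimate $\|\nabla\cdot w\|_{0}^{2}\leq 3\|w\|_{\bar{1}}^{2}$ for continuity, and for coercivity dropping the nonnegative divergence term, splitting $\|\overline{\nabla}w\|_{*}^{2}$ into two halves, and applying the Poincar\'{e}--Friedrich inequality to one half to recover the $L^{2}$ contribution. The only discrepancy, which you rightly flag, is your constant $\frac{\lambda+\mu}{2}\min\{1,C_{\Omega}^{-2}\}$ versus the stated $\frac{\lambda+\mu}{2C_{\Omega}}\min\{1,C_{\Omega}\}$: the paper obtains the latter by using the Poincar\'{e} inequality in the squared form $\|w\|_{\bar{0}}^{2}\leq C_{\Omega}\sum_{i}\|\nabla w_{i}\|_{\bar{0}}^{2}$ without squaring $C_{\Omega}$, so the lemma's constant reflects that bookkeeping rather than yours, and either version yields a valid coercivity bound.
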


   \begin{proof}
    A combination of equations $(\ref{6})$-$(\ref{8})$ along with the Cauchy-Schwarz inequality yield
    \begin{equation}\label{37}
    A(w,v)=(\lambda+\mu)\left(\overline{\nabla}w,\overline{\nabla}v\right)_{*}+\mu\left(\nabla\cdot w,\nabla\cdot v\right)_{0}\leq(\lambda+\mu)\|\overline{\nabla}w\|_{*}\|\overline{\nabla}v\|_{*}+\mu\|\nabla\cdot w\|_{0}\|\nabla\cdot v\|_{0}.
   \end{equation}
   Simple calculations provide
   \begin{equation}\label{38}
    \|\nabla\cdot w\|_{0}^{2}=\int_{\Omega}(\nabla\cdot w)^{2}d\Omega=\int_{\Omega}\left(\underset{i=1}{\overset{3}\sum}\frac{\partial w_{i}}{\partial x_{i}}\right)^{2}d\Omega\leq 3\int_{\Omega}\underset{i=1}{\overset{3}\sum}\left(\frac{\partial w_{i}}{\partial x_{i}}\right)^{2}d\Omega\leq 3\underset{i=1}{\overset{3}\sum}\|\nabla w_{i}\|_{\bar{0}}^{2}\leq 3\underset{i=1}{\overset{3}\sum}\|w_{i}\|_{1}^{2}=3\|w\|_{\bar{1}}^{2}.
    \end{equation}
    Similarly, one has
    \begin{equation}\label{39}
    \|\nabla\cdot v\|_{0}^{2}\leq 3\|v\|_{\bar{1}}^{2}.
    \end{equation}
    Additionally,
    \begin{equation}\label{40}
    \|\overline{\nabla}w\|_{*}^{2}= \int_{\Omega}\underset{i=1}{\overset{3}\sum}\underset{j=1}{\overset{3}\sum}\left(\frac{\partial w_{i}}{\partial x_{j}}\right)^{2}d\Omega=
     \underset{i=1}{\overset{3}\sum}\int_{\Omega}(\nabla w_{i})^{t}\nabla w_{i}d\Omega=\underset{i=1}{\overset{3}\sum}\|\nabla w_{i}\|_{\bar{0}}^{2}\leq \underset{i=1}{\overset{3}\sum}\|w_{i}\|_{1}^{2}=\|w\|_{\bar{1}}^{2}.
    \end{equation}
    Analogously, one easily shows that
    \begin{equation}\label{41}
    \|\overline{\nabla}v\|_{*}^{2}\leq \|v\|_{\bar{1}}^{2}.
    \end{equation}
    Taking the square root in estimates $(\ref{38})$-$(\ref{41})$ and substituting the obtained results into inequality $(\ref{37})$, we obtain
    \begin{equation*}
    A(w,v)\leq(\lambda+\mu)\|w\|_{\bar{1}}\|v\|_{\bar{1}}+3\mu\|w\|_{\bar{1}}\|v\|_{\bar{1}}=(\lambda+4\mu)\|w\|_{\bar{1}}\|v\|_{\bar{1}}.
   \end{equation*}
   This ends the proof of the first estimate in relation $(\ref{36})$.\\

   Now, utilizing the Poincar\'{e}-Friedrich inequality $(\ref{34})$, it is easy to see that
   \begin{equation}\label{42}
    \|w\|_{\bar{0}}^{2}=\underset{i=1}{\overset{3}\sum} \|w_{i}\|_{0}^{2}\leq C_{\Omega}\underset{i=1}{\overset{3}\sum}\|\nabla w_{i}\|_{\bar{0}}^{2}.
   \end{equation}
   Using this fact, it holds
   \begin{equation*}
    A(w,v)=(\lambda+\mu)\left(\overline{\nabla}w,\overline{\nabla}w\right)_{*}+\mu\left(\nabla\cdot w,\nabla\cdot w\right)_{0}=
    (\lambda+\mu)\|\overline{\nabla}w\|_{*}^{2}+\mu\|\nabla\cdot w\|_{0}^{2}\geq (\lambda+\mu)\|\overline{\nabla}w\|_{*}^{2}=(\lambda+\mu)\times
   \end{equation*}
   \begin{equation*}
    \underset{i=1}{\overset{3}\sum}\|\nabla w_{i}\|_{\bar{0}}^{2}=\frac{\lambda+\mu}{2}\underset{i=1}{\overset{3}\sum}(\|\nabla w_{i}\|_{\bar{0}}^{2}+\|\nabla w_{i}\|_{\bar{0}}^{2})\geq \frac{\lambda+\mu}{2}(C_{\Omega}^{-1}\|w_{i}\|_{\bar{0}}^{2}+\underset{i=1}{\overset{3}\sum}\|\nabla w_{i}\|_{\bar{0}}^{2})\geq \frac{\lambda+\mu}{2C_{\Omega}}\min\{1,C_{\Omega}\}\|\nabla w_{i}\|_{\bar{1}}^{2},
   \end{equation*}
   where $"\times"$ means the usual multiplication in the set of real numbers $\mathbb{R}$. The proof of Lemma $\ref{l1}$ is completed.
   \end{proof}

    \begin{remark}\label{r2}
     It's not difficult to see that the bilinear form $A(\cdot,\cdot)$ is symmetric. Thus, it follows from Lemma $\ref{l1}$ that $A(\cdot,\cdot)$ defines a scalar product on the Sobolev space $\mathcal{W}\times\mathcal{W}$. The norm $\||\cdot|\|_{A}$ associated with this scalar product is defined as
     \begin{equation}\label{43}
    \||w|\|_{A}=\sqrt{A(w,w)}=[(\lambda+\mu)\|\overline{\nabla}w\|_{*}^{2}+\mu\|\nabla\cdot w\|_{0}^{2}]^{\frac{1}{2}}.
   \end{equation}
   \end{remark}

   \begin{lemma}\label{l2}
     For every $w\in\mathcal{W}_{h}$, the following estimates hold
     \begin{equation}\label{44}
     C_{1}h^{2}\||w|\|_{A}^{2}\leq \|w\|_{\bar{0}}^{2}\leq C_{2}\||w|\|_{A}^{2},
     \end{equation}
     where $C_{1}=\frac{C_{pf}^{-2}}{18(\lambda+\mu)}$ and $C_{2}=\frac{C_{\Omega}}{\lambda+\mu}\frac{\max\{1,C_{\Omega}\}}{\min\{1,C_{\Omega}\}}$.
     \end{lemma}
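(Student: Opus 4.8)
The plan is to establish the two inequalities in $(\ref{44})$ independently. The right-hand estimate $\|w\|_{\bar{0}}^{2}\leq C_{2}\||w|\|_{A}^{2}$ is a continuous coercivity (Poincar\'{e}) bound that holds on all of $\mathcal{W}$, whereas the left-hand estimate $C_{1}h^{2}\||w|\|_{A}^{2}\leq\|w\|_{\bar{0}}^{2}$ is the only place where the finite-element hypothesis $w\in\mathcal{W}_{h}$ is genuinely needed, and it is what produces the mesh-dependent factor $h^{2}$.

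For the upper bound, I would start from the expression $(\ref{43})$ and discard the nonnegative divergence contribution, so that $\||w|\|_{A}^{2}\geq(\lambda+\mu)\|\overline{\nabla}w\|_{*}^{2}$. Applying the Poincar\'{e}-Friedrich inequality $(\ref{34})$ componentwise exactly as in $(\ref{42})$, together with the identity $\|\overline{\nabla}w\|_{*}^{2}=\sum_{i=1}^{3}\|\nabla w_{i}\|_{\bar{0}}^{2}$ from $(\ref{40})$, yields $\|w\|_{\bar{0}}^{2}\leq C_{\Omega}\|\overline{\nabla}w\|_{*}^{2}$. Combining the two gives $\|w\|_{\bar{0}}^{2}\leq\frac{C_{\Omega}}{\lambda+\mu}\||w|\|_{A}^{2}$, and since $\frac{\max\{1,C_{\Omega}\}}{\min\{1,C_{\Omega}\}}\geq 1$ this is in particular dominated by $C_{2}\||w|\|_{A}^{2}$, which settles the right-hand inequality.

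For the lower bound, the idea is to bound $\||w|\|_{A}^{2}$ from above by $\|w\|_{\bar{0}}^{2}$ at the cost of an $h^{-2}$ factor. First I would control the divergence term by the Jacobian term using $(\ref{38})$, namely $\|\nabla\cdot w\|_{0}^{2}\leq 3\|\overline{\nabla}w\|_{*}^{2}$, whence $\||w|\|_{A}^{2}\leq(\lambda+4\mu)\|\overline{\nabla}w\|_{*}^{2}\leq 6(\lambda+\mu)\|\overline{\nabla}w\|_{*}^{2}$, the last step being the crude but convenient bound $\lambda+4\mu\leq 6(\lambda+\mu)$ valid for $\lambda,\mu>0$. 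Next, because $w\in\mathcal{W}_{h}$, the inverse estimate $(\ref{33})$ applies to each partial derivative; summing its square over $i=1,2,3$ and using $\|\overline{\nabla}w\|_{*}^{2}=\sum_{i=1}^{3}\|\frac{\partial w}{\partial x_{i}}\|_{\bar{0}}^{2}$ gives $\|\overline{\nabla}w\|_{*}^{2}\leq 3C_{pf}^{2}h^{-2}\|w\|_{\bar{0}}^{2}$. Chaining these estimates produces $\||w|\|_{A}^{2}\leq 18(\lambda+\mu)C_{pf}^{2}h^{-2}\|w\|_{\bar{0}}^{2}$, which rearranges to exactly $C_{1}h^{2}\||w|\|_{A}^{2}\leq\|w\|_{\bar{0}}^{2}$ with $C_{1}=\frac{C_{pf}^{-2}}{18(\lambda+\mu)}$.

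The only genuinely delicate point is the lower bound, since it is where membership in $\mathcal{W}_{h}$ is essential: the inverse inequality $(\ref{33})$ fails on the full space $\mathcal{W}$, and it alone is responsible for the factor $h^{2}$. Everything else reduces to bookkeeping with the elementary estimates $(\ref{38})$, $(\ref{40})$ and $(\ref{42})$ already established in the proof of Lemma $\ref{l1}$, so I do not expect any essential difficulty beyond keeping track of the constants.
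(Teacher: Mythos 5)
Your proof is correct and rests on the same two ingredients as the paper's: the inverse estimate $(\ref{33})$ for the mesh-dependent lower bound and the Poincar\'{e}-Friedrich inequality $(\ref{34})$ for the upper bound. For the lower bound your argument is essentially the paper's, just organized differently: the paper applies the inverse estimate separately to $\|\overline{\nabla}w\|_{*}^{2}$ (getting a factor $3C_{pf}^{2}h^{-2}$) and to $\|\nabla\cdot w\|_{0}^{2}$ (getting $9C_{pf}^{2}h^{-2}$) and then averages the two resulting inequalities, whereas you first absorb the divergence term into the Jacobian term via $(\ref{38})$ and then apply the inverse estimate once; both routes land on the same constant $C_{1}=\frac{C_{pf}^{-2}}{18(\lambda+\mu)}$, and both implicitly need $\lambda+4\mu\leq 6(\lambda+\mu)$, i.e.\ a sign condition on $\lambda$ that the paper also uses tacitly (note that in the paper's own Example 1 one has $\lambda+\mu<0$, so the lemma only makes sense under such an assumption anyway). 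Where you genuinely diverge is the upper bound: the paper detours through the full $\|\cdot\|_{\bar{1}}$-norm and the coercivity estimate $A(w,w)\geq\frac{\lambda+\mu}{2C_{\Omega}}\min\{1,C_{\Omega}\}\|w\|_{\bar{1}}^{2}$ from Lemma $\ref{l1}$, which is what produces the ratio $\frac{\max\{1,C_{\Omega}\}}{\min\{1,C_{\Omega}\}}$ in $C_{2}$, while you simply drop the nonnegative divergence contribution from $\||w|\|_{A}^{2}$ and apply Poincar\'{e} directly, obtaining the sharper constant $\frac{C_{\Omega}}{\lambda+\mu}\leq C_{2}$. Your version of the upper bound is shorter, does not depend on Lemma $\ref{l1}$ at all, and shows that the factor $\frac{\max\{1,C_{\Omega}\}}{\min\{1,C_{\Omega}\}}$ in the stated $C_{2}$ is an artifact of the paper's route rather than a necessity; the only small caution is that $(\ref{42})$ as written (and hence both proofs) uses $C_{\Omega}$ where a literal reading of $(\ref{34})$ would give $C_{\Omega}^{2}$, but that discrepancy is inherited from the paper, not introduced by you.
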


   \begin{proof}
    Combining estimates $(\ref{33})$ and $(\ref{42})$, it is not hard to see that
    \begin{equation}\label{45}
    \frac{1}{3}C_{fp}^{-2}h^{2}\|\overline{\nabla}w\|_{*}^{2}\leq\|w\|_{\bar{0}}^{2}\leq C_{\Omega}\underset{i=1}{\overset{3}\sum}\|\nabla w_{i}\|_{\bar{0}}^{2}.
   \end{equation}
   But
    \begin{equation*}
    \|\nabla\cdot w\|_{0}^{2}=\int_{\Omega}\left(\underset{i=1}{\overset{3}\sum}\frac{\partial w_{i}}{\partial x_{i}}\right)^{2}d\Omega\leq 3\underset{i=1}{\overset{3}\sum}\int_{\Omega}\left(\frac{\partial w_{i}}{\partial x_{i}}\right)^{2}d\Omega\leq3\underset{i=1}{\overset{3}\sum}\left\|\frac{\partial w}{\partial x_{i}}\right\|_{\bar{0}}^{2}\leq 9C_{pf}^{2}h^{-2}\|w\|_{\bar{0}}^{2},
    \end{equation*}
    which is equivalent to
    \begin{equation*}
    \frac{1}{9}C_{pf}^{-2}h^{2}\|\nabla\cdot w\|_{0}^{2}\leq \|w\|_{\bar{0}}^{2}.
    \end{equation*}
    Plugging this inequality and estimates in $(\ref{45})$ to get
   \begin{equation*}
    \frac{C_{fp}^{-2}h^{2}}{18(\lambda+\mu)}[(\lambda+\mu)\|\overline{\nabla}w\|_{*}^{2}+\mu\|\nabla\cdot w\|_{0}^{2}]\leq\|w\|_{\bar{0}}^{2}\leq \|w\|_{\bar{0}}^{2}+C_{\Omega}\|\nabla w_{i}\|_{\bar{0}}^{2}\leq \frac{1}{2}\max\{1,C_{\Omega}\}\|w\|_{\bar{1}}^{2}.
   \end{equation*}
   Utilizing the second estimate in $(\ref{36})$ together with inequality $(\ref{43})$, this complete the proof of Lemma $\ref{l2}$.
   \end{proof}

   \section{Stability analysis and error estimates of the new algorithm}\label{sec3}
    In this Section, we analyze under the time step requirement $(\ref{sr})$, both stability and error estimates of the proposed computational technique $(\ref{28})$-$(\ref{31})$ in computed solutions of the three-dimensional system of geological structure deformation $(\ref{1})$ subjects to initial conditions $(\ref{2})$ and boundary condition $(\ref{3})$. As already indicated in the second item of Remark $\ref{r1}$, the analysis will be restricted on the equations satisfied by the displacement $w_{h}$.

    \begin{theorem} \label{t1} (Stability analysis).
     Let $w\in [H^{4}(0,T_{f};\text{\,}H^{5})]^{3}$ be the exact solution of the initial-boundary value problem $(\ref{1})$-$(\ref{3})$ and $w_{h}\in\mathcal{W}_{h}$ be the approximate solution provided by the new algorithm $(\ref{28})$-$(\ref{31})$. Under the time step limitation $(\ref{sr})$, the following estimate is satisfied
     \begin{equation*}
     \|w_{h}^{n+1}\|_{\bar{0}}^{2}+\frac{C_{1}}{\nu\gamma_{0}\gamma_{1}}\left(\||w_{h}^{n+1}|\|_{A}^{2}+\||w_{h}^{1}-w_{h}^{0}|\|_{A}^{2}\right)\leq
      \frac{C_{1}}{\nu\gamma_{0}\gamma_{1}}\left[4\nu\||w_{t}|\|_{\bar{0},\infty}^{2}+\nu k^{2}\||w_{2t}|\|_{\bar{0},\infty}^{2}\right.
     \end{equation*}
     \begin{equation*}
      \left.\||w_{0}+kw_{1}|\|_{A}^{2}+\||w_{0}|\|_{A}^{2}+\frac{9T_{f}}{\nu}\||g|\|_{\bar{0},\infty}^{2}\right]\exp(C_{1}\gamma_{0}^{-1}T_{f}),
     \end{equation*}
      for $n=0,1,...,N-1$, where $\gamma_{0}=1-\frac{C_{sr}^{2}}{2\nu}>0$, $\gamma_{1}=\min\{1,\frac{C_{1}}{2\gamma_{0}C_{2}\nu}\}$, $C_{l}$ for $l=1,2$, are given in Lemma $\ref{l2}$, while $C_{sr}$ is the positive constant defined in relation $(\ref{sr})$.
    \end{theorem}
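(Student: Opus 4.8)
The plan is to run a discrete energy argument tailored to the leapfrog-type structure of scheme $(\ref{28})$. First I would test $(\ref{28})$ with the particular choice $v=w_{h}^{n+1}-w_{h}^{n-1}\in\mathcal{W}$. On the left-hand side, writing $w_{h}^{n+1}-2w_{h}^{n}+w_{h}^{n-1}=(w_{h}^{n+1}-w_{h}^{n})-(w_{h}^{n}-w_{h}^{n-1})$ together with $w_{h}^{n+1}-w_{h}^{n-1}=(w_{h}^{n+1}-w_{h}^{n})+(w_{h}^{n}-w_{h}^{n-1})$, the $\left(\cdot,\cdot\right)_{\bar{0}}$ product collapses into the telescoping difference $\|w_{h}^{n+1}-w_{h}^{n}\|_{\bar{0}}^{2}-\|w_{h}^{n}-w_{h}^{n-1}\|_{\bar{0}}^{2}$. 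On the right I would exploit the symmetry of $A(\cdot,\cdot)$ (Remark $\ref{r2}$) to split $A(w_{h}^{n},w_{h}^{n+1}-w_{h}^{n-1})=A(w_{h}^{n+1},w_{h}^{n})-A(w_{h}^{n},w_{h}^{n-1})$, which telescopes as well. This produces a one-step identity $E^{n}-E^{n-1}=R^{n}$ for the discrete energy $E^{n}:=\|w_{h}^{n+1}-w_{h}^{n}\|_{\bar{0}}^{2}+\frac{k^{2}}{\nu}A(w_{h}^{n+1},w_{h}^{n})$, where $R^{n}$ collects the two forcing contributions tested against $w_{h}^{n+1}-w_{h}^{n-1}$.

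The crucial step, and the main obstacle, is to show that $E^{n}$ is a genuine positive-definite energy under the CFL restriction $(\ref{sr})$. Here I would use the polarization identity $A(w_{h}^{n+1},w_{h}^{n})=\frac{1}{2}(\||w_{h}^{n+1}|\|_{A}^{2}+\||w_{h}^{n}|\|_{A}^{2}-\||w_{h}^{n+1}-w_{h}^{n}|\|_{A}^{2})$ and then control the indefinite cross term by the inverse estimate of Lemma $\ref{l2}$, namely $\||w_{h}^{n+1}-w_{h}^{n}|\|_{A}^{2}\leq (C_{1}h^{2})^{-1}\|w_{h}^{n+1}-w_{h}^{n}\|_{\bar{0}}^{2}$. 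Combining with $(\ref{sr})$ in the form $k^{2}/h^{2}\leq C_{sr}^{2}<2\nu$, the negative multiple of $\|w_{h}^{n+1}-w_{h}^{n}\|_{\bar{0}}^{2}$ is absorbed, leaving a coefficient bounded below by $\gamma_{0}=1-\frac{C_{sr}^{2}}{2\nu}>0$; this is precisely where $\gamma_{0}$ and the definition of $\gamma_{1}$ enter, yielding a lower bound of the form $E^{n}\geq\gamma_{0}\|w_{h}^{n+1}-w_{h}^{n}\|_{\bar{0}}^{2}+\frac{k^{2}}{2\nu}(\||w_{h}^{n+1}|\|_{A}^{2}+\||w_{h}^{n}|\|_{A}^{2})$ up to the admissible constants. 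It is this absorption that forces the strict inequality $C_{sr}<\sqrt{2\nu}$.

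Next I would estimate the remainder $R^{n}$. Applying Cauchy--Schwarz in $\left(\cdot,\cdot\right)_{\bar{0}}$ to $\frac{k^{2}}{2\nu}\left(g^{n}-g^{n-1},w_{h}^{n+1}-w_{h}^{n-1}\right)_{\bar{0}}$ and $\frac{k}{\nu}\left(\int_{t_{n-1}}^{t_{n}}g\,dt,w_{h}^{n+1}-w_{h}^{n-1}\right)_{\bar{0}}$, followed by Young's inequality, splits each into a small multiple of $\|w_{h}^{n+1}-w_{h}^{n}\|_{\bar{0}}^{2}+\|w_{h}^{n}-w_{h}^{n-1}\|_{\bar{0}}^{2}$, absorbable into the energy, plus a data term controlled by $\||g|\|_{\bar{0},\infty}^{2}$; the factor $\frac{9T_{f}}{\nu}$ in the statement arises from this bookkeeping together with $\int_{t_{n-1}}^{t_{n}}g\,dt$ being $\bar{O}(k)$. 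Summing the identity from $1$ to $n$ gives $E^{n}\leq E^{0}+\sum_{j}R^{j}$, and after moving the absorbable pieces to the left I would close the argument with the discrete Gronwall inequality, which produces the exponential factor $\exp(C_{1}\gamma_{0}^{-1}T_{f})$.

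Finally I would pin down the initial energy $E^{0}=\|w_{h}^{1}-w_{h}^{0}\|_{\bar{0}}^{2}+\frac{k^{2}}{\nu}A(w_{h}^{1},w_{h}^{0})$ using the prescribed data $(\ref{29})$: since $w_{h}^{1}-w_{h}^{0}=kw_{1}$ by $(\ref{31})$, the first term contributes $\|w_{1}\|_{\bar{0}}^{2}\leq\||w_{t}|\|_{\bar{0},\infty}^{2}$, the truncation bound $(\ref{32})$ accounts for the $\nu k^{2}\||w_{2t}|\|_{\bar{0},\infty}^{2}$ contribution, and polarization of $A(w_{h}^{1},w_{h}^{0})$ with $w_{h}^{1}=w_{0}+kw_{1}$, $w_{h}^{0}=w_{0}$ yields the $\||w_{0}+kw_{1}|\|_{A}^{2}+\||w_{0}|\|_{A}^{2}$ terms. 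To reach the stated left-hand side I would then discard the nonnegative $\|w_{h}^{n+1}-w_{h}^{n}\|_{\bar{0}}^{2}$ piece and invoke the upper estimate $\|w_{h}^{n+1}\|_{\bar{0}}^{2}\leq C_{2}\||w_{h}^{n+1}|\|_{A}^{2}$ of Lemma $\ref{l2}$ to recover $\|w_{h}^{n+1}\|_{\bar{0}}^{2}$, normalizing the remaining constants into the prefactor $\frac{C_{1}}{\nu\gamma_{0}\gamma_{1}}$. The telescoping and Gronwall steps are routine; the delicate point remains the CFL absorption of the second paragraph.
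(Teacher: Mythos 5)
Your proposal is correct and follows essentially the same route as the paper's proof: testing $(\ref{28})$ with $v=w_{h}^{n+1}-w_{h}^{n-1}$, telescoping the resulting identity, absorbing the indefinite $A$-term via the inverse estimate of Lemma $\ref{l2}$ under the CFL restriction $(\ref{sr})$, and closing with Cauchy--Schwarz/Young on the forcing, the discrete Gronwall inequality, and the initial-data bound $(\ref{32})$. The only cosmetic difference is that you package the telescoped quantities as an explicit leapfrog energy $E^{n}$ before polarizing, whereas the paper polarizes the cross term first and then sums.
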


    \begin{proof}
      It follows from equation $(\ref{28})$ that
      \begin{equation}\label{46}
      \left(w_{h}^{n+1}-2w_{h}^{n}+w_{h}^{n-1},v\right)_{\bar{0}}=-\frac{k^{2}}{\nu}A(w_{h}^{n},v)+\frac{k^{2}}{2\nu}\left(g^{n}-g^{n-1},v\right)_{\bar{0}}+
      \frac{k}{\nu}\left(\int_{t_{n-1}}^{t_{n}}gdt,v\right)_{\bar{0}},\text{\,\,\,\,}\forall v\in \mathcal{W},
      \end{equation}
      Since $g\in[H^{2}(0,T_{f};\text{\,}H^{3})]^{3}$, the function $t\mapsto g(t)$ is continuous on $[0,\text{\,}T_{f}]$. Utilizing the integral Mean Value Theorem, there exists $\epsilon_{n}\in(t_{n-1},\text{\,}t_{n})$ so that $g(\epsilon_{n})=\frac{1}{t_{n}-t_{n-1}}\int_{t_{n-1}}^{t_{n}}g(t)dt$, which is equivalent to $\int_{t_{n-1}}^{t_{n}}g(t)dt=kg(\epsilon_{n})$. Substituting this into equation $(\ref{46})$ and replacing $v$ with $w_{h}^{n+1}-w_{h}^{n-1}$, yield
      \begin{equation}\label{47}
      \left(w_{h}^{n+1}-2w_{h}^{n}+w_{h}^{n-1},w_{h}^{n+1}-w_{h}^{n-1}\right)_{\bar{0}}=-\frac{k^{2}}{\nu}A(w_{h}^{n},w_{h}^{n+1}-w_{h}^{n-1})+\frac{k^{2}}{2\nu}
      \left(g^{n}-g^{n-1}+2g(\epsilon_{n}),w_{h}^{n+1}-w_{h}^{n-1}\right)_{\bar{0}}.
      \end{equation}
      Performing straightforward computations, it holds
      \begin{equation}\label{48}
      \left(w_{h}^{n+1}-2w_{h}^{n}+w_{h}^{n-1},w_{h}^{n+1}-w_{h}^{n-1}\right)_{\bar{0}}=\|w_{h}^{n+1}-w_{h}^{n}\|_{\bar{0}}^{2}-\|w_{h}^{n}-w_{h}^{n-1}\|_{\bar{0}}^{2},
     \end{equation}
     \begin{equation}\label{49}
      -A(w_{h}^{n},w_{h}^{n+1}-w_{h}^{n-1})=\frac{1}{2}[\||w_{h}^{n+1}-w_{h}^{n}|\|_{A}^{2}-\||w_{h}^{n}-w_{h}^{n-1}|\|_{A}^{2}-\||w_{h}^{n+1}|
      \|_{A}^{2}+\||w_{h}^{n-1}|\|_{A}^{2}].
     \end{equation}

     Since $A(\cdot,\cdot)$ is a scalar product (according to Remark $\ref{r1}$), apply the Cauchy-Schwarz inequality to obtain
      \begin{equation*}
      \frac{k^{2}}{2\nu}\left(g^{n}-g^{n-1}+2g(\epsilon_{n}),w_{h}^{n+1}-w_{h}^{n-1}\right)_{\bar{0}}\leq \frac{k^{2}}{2\nu}\|g^{n}-g^{n-1}+2g(\epsilon_{n})\|_{\bar{0}}
      \|w_{h}^{n+1}-w_{h}^{n-1}\|_{\bar{0}}\leq\frac{k^{3}}{4\nu^{2}}\|g^{n}-g^{n-1}+2g(\epsilon_{n})\|_{\bar{0}}^{2}
      \end{equation*}
      \begin{equation}\label{50}
      +\frac{k}{4}\|w_{h}^{n+1}-w_{h}^{n-1}\|_{\bar{0}}^{2}\leq\frac{3k^{3}}{4\nu^{2}}(\|g^{n}\|_{\bar{0}}^{2}+\|g^{n-1}\|_{\bar{0}}^{2}+4\|g(\epsilon_{n})\|_{\bar{0}}^{2})+
      \frac{k}{2}(\|w_{h}^{n+1}-w_{h}^{n}\|_{\bar{0}}^{2}+\|w_{h}^{n}-w_{h}^{n-1}\|_{\bar{0}}^{2}).
      \end{equation}

       Substituting estimates $(\ref{48})$-$(\ref{50})$ into equation $(\ref{47})$, summing up the new estimate for $l=1,2,...,n$, observing that
     \begin{equation*}
      \underset{l=1}{\overset{n}\sum}(\|w_{h}^{l+1}-w_{h}^{l}\|_{\bar{0}}^{2}+\|w_{h}^{l}-w_{h}^{l-1}\|_{\bar{0}}^{2})\leq
      2\underset{l=0}{\overset{n}\sum}\|w_{h}^{l+1}-w_{h}^{l}\|_{\bar{0}}^{2},
     \end{equation*}
     and rearranging term, we obtain
     \begin{equation*}
      \|w_{h}^{n+1}-w_{h}^{n}\|_{\bar{0}}^{2}-\frac{k^{2}}{2\nu}\|w_{h}^{n+1}-w_{h}^{n}\|_{A}^{2}+\frac{k^{2}}{2\nu}\left(\||w_{h}^{n+1}|\|_{A}^{2}+\||w_{h}^{n}|\|_{A}^{2}+
      \||w_{h}^{1}-w_{h}^{0}|\|_{A}^{2}\right)\leq \|w_{h}^{1}-w_{h}^{0}\|_{\bar{0}}^{2}+
     \end{equation*}
     \begin{equation}\label{51}
      \frac{k^{2}}{2\nu}\left(\||w_{h}^{1}|\|_{A}^{2}+\||w_{h}^{0}|\|_{A}^{2}\right)+\frac{3k^{3}}{2\nu^{2}}\underset{l=0}{\overset{n}\sum}(\|g^{l}\|_{\bar{0}}^{2}+
      2\|g(\epsilon_{l})\|_{\bar{0}}^{2})+k\underset{l=0}{\overset{n}\sum}\|w_{h}^{l+1}-w_{h}^{l}\|_{\bar{0}}^{2}.
     \end{equation}

     Using the time step requirement $(\ref{sr})$, it holds: $k\|w_{h}^{n+1}-w_{h}^{n}\|_{A}\leq C_{sr}h\|w_{h}^{n+1}-w_{h}^{n}\|_{A}$, which implies
     $-\frac{C_{sr}^{2}h^{2}}{2\nu}\|w_{h}^{n+1}-w_{h}^{n}\|_{A}^{2}\leq -\frac{k^{2}}{2\nu}\|w_{h}^{n+1}-w_{h}^{n}\|_{A}^{2}$. This fact combined estimate $(\ref{44})$ imply
     \begin{equation}\label{52}
     -\frac{C_{sr}^{2}}{2\nu}\|w_{h}^{n+1}-w_{h}^{n}\|_{\bar{0}}^{2}\leq -\frac{C_{1}C_{sr}^{2}h^{2}}{2\nu}\|w_{h}^{n+1}-w_{h}^{n}\|_{A}^{2}\leq -\frac{C_{1}k^{2}}{2\nu}\|w_{h}^{n+1}-w_{h}^{n}\|_{A}^{2},
     \end{equation}
     where $C_{1}$ is the positive constant given in Lemma $\ref{l2}$. Multiplying both sides of estimate $(\ref{51})$ by $C_{1}$ and using inequality $(\ref{52})$, this provides
     \begin{equation*}
     \left(1-\frac{C_{sr}^{2}}{2\nu}\right)\|w_{h}^{n+1}-w_{h}^{n}\|_{\bar{0}}^{2}+\frac{C_{1}k^{2}}{2\nu}\left(\||w_{h}^{n+1}|\|_{A}^{2}+\||w_{h}^{n}|\|_{A}^{2}+
      \||w_{h}^{1}-w_{h}^{0}|\|_{A}^{2}\right)\leq C_{1}\|w_{h}^{1}-w_{h}^{0}\|_{\bar{0}}^{2}+
     \end{equation*}
     \begin{equation}\label{53}
      \frac{C_{1}k^{2}}{2\nu}\left(\||w_{h}^{1}|\|_{A}^{2}+\||w_{h}^{0}|\|_{A}^{2}\right)+\frac{3C_{1}k^{3}}{2\nu^{2}}\underset{l=0}{\overset{n}\sum}(\|g^{l}\|_{\bar{0}}^{2}+
      2\|g(\epsilon_{l})\|_{\bar{0}}^{2})+C_{1}k\underset{l=0}{\overset{n}\sum}\|w_{h}^{l+1}-w_{h}^{l}\|_{\bar{0}}^{2}.
     \end{equation}
      Since $k=\frac{T_{f}}{N}$ and $0<C_{sr}<\sqrt{2\nu}$, so $k\underset{l=0}{\overset{n}\sum}(\|g^{l}\|_{\bar{0}}^{2}+2\|g(\epsilon_{l})\|_{\bar{0}}^{2})\leq (n+1)k\underset{0\leq l\leq n}{\max}(\|g^{l}\|_{\bar{0}}^{2}+2\|g(\epsilon_{l})\|_{\bar{0}}^{2})\leq T_{f}\underset{0\leq l\leq N}{\max}(\|g^{l}\|_{\bar{0}}^{2}+2\|g(\epsilon_{l})\|_{\bar{0}}^{2})=3T_{f}\||g|\|_{\bar{0},\infty}^{2}$ and $1-\frac{C_{sr}^{2}}{2\nu}>0$. Setting $\gamma_{0}=1-\frac{C_{sr}^{2}}{2\nu}$, for small values of the time step $k$, multiplying both sides of estimate $(\ref{53})$ by $\gamma_{0}^{-1}$, applying the discrete Gronwall inequality and utilizing estimate $(n+1)k\leq T_{f}$, for $n=1,2,...,N-1$, this gives
     \begin{equation*}
     \|w_{h}^{n+1}-w_{h}^{n}\|_{\bar{0}}^{2}+\frac{C_{1}k^{2}}{2\nu\gamma_{0}}\left(\||w_{h}^{n+1}|\|_{A}^{2}+\||w_{h}^{n}|\|_{A}^{2}+
      \||w_{h}^{1}-w_{h}^{0}|\|_{A}^{2}\right)\leq \frac{C_{1}}{2\nu\gamma_{0}}\left[2\nu\|w_{h}^{1}-w_{h}^{0}\|_{\bar{0}}^{2}+\right.
     \end{equation*}
     \begin{equation}\label{54}
      \left.k^{2}\left(\||w_{h}^{1}|\|_{A}^{2}+\||w_{h}^{0}|\|_{A}^{2}\right)+\frac{9T_{f}k^{2}}{\nu}\||g|\|_{\bar{0},\infty}^{2}\right]
      \exp(C_{1}\gamma_{0}^{-1}T_{f}).
     \end{equation}

      But it follows from the triangular inequality that $\|w_{h}^{n+1}\|_{\bar{0}}^{2}\leq(\|w_{h}^{n+1}-w_{h}^{n}\|_{\bar{0}}+\|w_{h}^{n}\|_{\bar{0}})^{2}\leq 2(\|w_{h}^{n+1}-w_{h}^{n}\|_{\bar{0}}^{2}+ \|w_{h}^{n}\|_{\bar{0}}^{2})$. Utilizing this fact along with the second estimate in relation $(\ref{44})$, it is not hard to observe that
      \begin{equation*}
     k^{2}\min\left\{1,\frac{C_{1}}{2C_{2}\nu\gamma_{0}}\right\}\|w_{h}^{n+1}\|_{\bar{0}}^{2}\leq2k^{2}\min\left\{1,\frac{C_{1}}{2C_{2}\nu\gamma_{0}}\right\}
     (\|w_{h}^{n+1}-w_{h}^{n}\|_{\bar{0}}^{2}+\|w_{h}^{n}\|_{\bar{0}}^{2})\leq 2\|w_{h}^{n+1}-w_{h}^{n}\|_{\bar{0}}^{2}+\frac{C_{1}k^{2}}{\nu\gamma_{0}}\|w_{h}^{n}\|_{A}^{2}.
     \end{equation*}

      Multiplying estimate $(\ref{54})$ by $2$ and utilizing the above inequalities yield
      \begin{equation*}
     k^{2}\min\left\{1,\frac{C_{1}}{2C_{2}\nu\gamma_{0}}\right\}\|w_{h}^{n+1}\|_{\bar{0}}^{2}+\frac{C_{1}k^{2}}{\nu\gamma_{0}}\left(\||w_{h}^{n+1}|\|_{A}^{2}
     +\||w_{h}^{1}-w_{h}^{0}|\|_{A}^{2}\right)\leq \frac{C_{1}}{\nu\gamma_{0}}\left[2\nu\|w_{h}^{1}-w_{h}^{0}\|_{\bar{0}}^{2}+\right.
     \end{equation*}
     \begin{equation}\label{55}
      \left.k^{2}\left(\||w_{h}^{1}|\|_{A}^{2}+\||w_{h}^{0}|\|_{A}^{2}\right)+\frac{9T_{f}k^{2}}{\nu}\||g|\|_{\bar{0},\infty}^{2}\right]
      \exp(C_{1}\gamma_{0}^{-1}T_{f}).
     \end{equation}

      It follows from the initial conditions $(\ref{2})$ and $(\ref{29})$ together with estimate $(\ref{32})$ and the Taylor expansion that
      \begin{equation}\label{56}
      \|w_{h}^{1}-w_{h}^{0}\|_{\bar{0}}^{2}\leq 2(\|w_{h}^{1}-w^{1}\|_{\bar{0}}^{2}+\|w^{1}-w_{0}\|_{\bar{0}}^{2})\leq \frac{1}{2}k^{4}\||w_{2t}|\|_{\bar{0},\infty}^{2}+2k^{2}\||w_{t}|\|_{\bar{0},\infty}^{2},
      \end{equation}
      since $w^{1}=w_{0}+kw_{t}^{0}+\frac{k^{2}}{2}w_{2t}(\epsilon_{1})$, where $w_{t}^{0}=w_{1}$ and $0<\epsilon_{1}<t_{1}$. Setting $\gamma_{1}=\min\left\{1,\frac{C_{1}}{2C_{2}\nu\gamma_{0}}\right\}$, substituting estimate $(\ref{56})$ into inequality $(\ref{55})$ and multiplying the new estimate by $\gamma_{1}^{-1}k^{-2}$, to complete the proof of Theorem $\ref{t1}$.
      \end{proof}

      In the following we should establish the convergence order of the proposed modified Lax-Wendroff/interpolation technique with finite element method $(\ref{28})$-$(\ref{31})$ for solving a three-dimensional system of tectonic deformation $(\ref{1})$ subjects to initial and boundary conditions $(\ref{2})$ and $(\ref{3})$, respectively. We assume that the finite element subspaces $\mathcal{W}_{h}$ satisfy the usual approximation properties of piecewise polynomials of degree $3$ and $4$, that is:
      \begin{equation}\label{57}
      \underset{w_{h}\in\mathcal{W}_{h}}{\inf}\|w-w_{h}\|_{\bar{0}}\leq C_{3}h^{4}\|w\|_{\overline{4}},\text{\,\,\,\,\,\,\,\,\,\,}\text{\,\,\,\,\,\,\,\,\,}\forall w\in\mathcal{W},
     \end{equation}
     where $\|\cdot\|_{\overline{4}}$ denotes the norm defined as $\|w\|_{\overline{4}}=\left(\underset{i=1}{\overset{3}\sum}\|w\|_{4}^{2}\right)^{\frac{1}{2}}$, for any $w=(w_{1},w_{2},w_{3})\in[W_{2}^{4}(\Omega)]^{3}$.

     \begin{theorem} \label{t2} (Error estimates).
      Consider $w\in[H^{4}(0,T_{f};\text{\,}W_{2}^{5})]^{3}$, be the analytical solution of the initial-boundary value problem $(\ref{1})$-$(\ref{3})$ and let $w_{h}$ be the computed one provided by the developed computational technique $(\ref{28})$-$(\ref{31})$. Set $e_{h}^{n}=w_{h}^{n}-w^{n}$ be the error term at the discrete time $t_{n}$. Under the time step restriction $(\ref{sr})$, it holds
     \begin{equation*}
     \|e_{h}^{n+1}\|_{\bar{0}}^{2}+\frac{C_{1}}{\nu\gamma_{0}\gamma_{1}}\||e_{h}^{n+1}|\|_{A}^{2}\leq \widehat{C}(k^{2}+h^{3})^{2},
     \end{equation*}
      for $n=0,1,...,N-1$, where $\gamma_{0}$, $\gamma_{1}$, $C_{1}$ and $C_{2}$ are the constants given in Theorem $\ref{t1}$, and $\widehat{C}$ is a positive constant independent of the grid space $h$ and the time step $k$.
     \end{theorem}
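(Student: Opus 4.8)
The plan is to transcribe the energy argument of Theorem \ref{t1} onto a discrete error from which the spatial approximation error has been purged by a projection. Since the exact solution satisfies (\ref{27}) for every $v\in\mathcal W$ (carrying the consistency residual $\bar{O}(k^4)$) and the computed solution satisfies (\ref{28}), subtracting the two equations annihilates the identical right-hand-side $g$-terms and leaves, for all $v_h\in\mathcal W_h\subset\mathcal W$,
$$\left(e_h^{n+1}-2e_h^n+e_h^{n-1},v_h\right)_{\bar{0}}+\frac{k^2}{\nu}A(e_h^n,v_h)=-\left(\tau^n,v_h\right)_{\bar{0}},\qquad \|\tau^n\|_{\bar{0}}\le Ck^4,$$
the bound on $\tau^n$ following from the regularity (\ref{35}). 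Because $e_h^n=w_h^n-w^n\notin\mathcal W_h$, I cannot feed it into the inverse inequality of Lemma \ref{l2}; so I would introduce the elliptic (Ritz) projection $R_h$ attached to the scalar product $A(\cdot,\cdot)$ --- legitimate by Remark \ref{r2} --- and split $e_h^n=\theta^n+\rho^n$ with $\theta^n=w_h^n-R_hw^n\in\mathcal W_h$ and $\rho^n=R_hw^n-w^n$.

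Inserting the splitting and using the defining property $A(\rho^n,v_h)=0$, the discrete part $\theta^n$ satisfies the scheme (\ref{28}) with a single extra datum,
$$\left(\theta^{n+1}-2\theta^n+\theta^{n-1},v_h\right)_{\bar{0}}+\frac{k^2}{\nu}A(\theta^n,v_h)=-\left(\delta^2\rho^n+\tau^n,v_h\right)_{\bar{0}},\qquad \delta^2\rho^n:=\rho^{n+1}-2\rho^n+\rho^{n-1}.$$
On this identity I would run the proof of Theorem \ref{t1} line for line: test with $v_h=\theta^{n+1}-\theta^{n-1}$, use the two algebraic identities (\ref{48})--(\ref{49}), estimate the datum by Cauchy--Schwarz and a weighted Young inequality so as to peel off a $\tfrac{k}{2}\big(\|\theta^{l+1}-\theta^l\|_{\bar{0}}^2+\|\theta^l-\theta^{l-1}\|_{\bar{0}}^2\big)$ piece for Gronwall while retaining $\tfrac1k\|\delta^2\rho^l+\tau^l\|_{\bar{0}}^2$ as genuine data, sum over $l=1,\dots,n$, and invoke the time-step restriction (\ref{sr}) together with Lemma \ref{l2} exactly as in (\ref{52}) to make the offending term $-\tfrac{k^2}{2\nu}\||\theta^{l+1}-\theta^l|\|_A^2$ absorbable. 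The discrete Gronwall inequality and the final normalization by $\gamma_1^{-1}k^{-2}$ then give the analogue of Theorem \ref{t1},
$$\|\theta^{n+1}\|_{\bar{0}}^2+\frac{C_1}{\nu\gamma_0\gamma_1}\||\theta^{n+1}|\|_A^2\le C\Big[\tfrac{1}{k^2}\|\theta^1-\theta^0\|_{\bar{0}}^2+\||\theta^1|\|_A^2+\||\theta^0|\|_A^2+\tfrac{1}{k^3}\sum_{l=1}^n\|\delta^2\rho^l+\tau^l\|_{\bar{0}}^2\Big]\exp(C_1\gamma_0^{-1}T_f).$$

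It remains to size the data. Writing $w_h^0=R_hw_0$ gives $\theta^0=0$, while a second-order Taylor start yields $e_h^1=O(k^3)$, hence $\theta^1=O(k^3)$ and the initialization data $\tfrac1{k^2}\|\theta^1-\theta^0\|_{\bar{0}}^2+\||\theta^1|\|_A^2=O(k^4)$. For the residuals, $\|\tau^l\|_{\bar{0}}=O(k^4)$ and $\delta^2\rho^l=k^2\rho_{2t}(\eta_l)$ with $\|\rho_{2t}\|_{\bar{0}}\le C_3h^4\|w_{2t}\|_{\overline{4}}$ by (\ref{57}) (the $L^2$ accuracy of $R_h$ coming from the Aubin--Nitsche duality), so $\tfrac1{k^3}\sum_l\|\delta^2\rho^l+\tau^l\|_{\bar{0}}^2=O(h^8+k^4)$ after using $k\sum_l 1\le T_f$. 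Thus $\|\theta^{n+1}\|_{\bar{0}}+\||\theta^{n+1}|\|_A=O(k^2+h^4)$. The decisive spatial term lives entirely in $\rho$: Céa's lemma, the continuity and coercivity of $A$ (Lemma \ref{l1}) and the $H^1$ approximation property of $\mathcal W_h$ give $\||\rho^{n+1}|\|_A\le C\inf_{v_h}\|w^{n+1}-v_h\|_{\bar{1}}=O(h^3)$ --- one power of $h$ below the $L^2$ rate --- while $\|\rho^{n+1}\|_{\bar{0}}=O(h^4)$. Writing $e_h^{n+1}=\theta^{n+1}+\rho^{n+1}$ and applying the triangle inequality in both norms yields $\|e_h^{n+1}\|_{\bar{0}}=O(k^2+h^4)$ and $\||e_h^{n+1}|\|_A=O(k^2+h^3)$, and squaring and combining delivers the claimed $\widehat{C}(k^2+h^3)^2$.

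The Gronwall machinery is a mechanical transcription of Theorem \ref{t1}; the two places where care is essential are the separation of scales in the spatial error and the starting value. For the former one must recognize that the $O(h^3)$ rate is produced solely by the energy-norm projection error $\||\rho|\|_A$ --- one order coarser than the $L^2$ rate (\ref{57}) --- and check that every appearance of $\theta$ (through the source $\delta^2\rho$ and through the initialization) stays at the harmless orders $O(h^4)$ and $O(k^2)$. The latter is the genuinely delicate point: the energy identity controls $e_h^{n+1}$ only through the discrete initial velocity $\|e_h^1-e_h^0\|_{\bar{0}}/k$, so the second-order temporal rate hinges on $e_h^1$ being accurate to $O(k^3)$; this is exactly what the Taylor start (\ref{31})--(\ref{32}), read together with the value of $w_{2t}^0$ furnished by the first equation of (\ref{1}), is meant to supply, and verifying $\|e_h^1-e_h^0\|_{\bar{0}}^2/k^2=O(k^4)$ is the crux of matching the $O(k^2)$ asserted in the statement.
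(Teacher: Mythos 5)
Your proposal is correct in outline but follows a genuinely different route from the paper's. The paper works directly with the raw error $e_{h}^{n}=w_{h}^{n}-w^{n}$: it subtracts $(\ref{27})$ from $(\ref{28})$, tests with $e_{h}^{n+1}-e_{h}^{n-1}$, runs the same summation/Gronwall machinery as Theorem $\ref{t1}$, and invokes the approximation property $(\ref{57})$ only at the very end, to bound the single initial-data term $k^{-2}\|w_{h}^{1}-w^{1}\|_{\bar{0}}^{2}$; the spatial rate there arises from $k^{-2}h^{2p+2}=h^{2p}$ under the extreme case $k=C_{sr}h$ with $p=3$. You instead introduce the Ritz projection $R_{h}$ attached to $A(\cdot,\cdot)$, split $e_{h}^{n}=\theta^{n}+\rho^{n}$, run the energy argument on $\theta^{n}\in\mathcal{W}_{h}$ with the extra datum $\delta^{2}\rho^{n}$, and let the $h^{3}$ come from the energy-norm projection error $\||\rho|\|_{A}$ via C\'ea's lemma, with $\|\rho\|_{\bar{0}}=O(h^{4})$ by duality. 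Your route buys something real: the paper's step $(\ref{63})$ applies the inverse-type estimate of Lemma $\ref{l2}$ to $e_{h}^{n+1}-e_{h}^{n}$, which is \emph{not} an element of $\mathcal{W}_{h}$ (it contains the exact solution), so that step is unjustified as written; your projection ensures that every object fed into Lemma $\ref{l2}$ is a genuine finite element function. The price is the extra bookkeeping of $\delta^{2}\rho$ and of the projected initial data, all of which you size at the harmless orders $O(h^{8}+k^{4})$ correctly.

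One caveat, which you yourself flag as the crux and leave unverified: the claim $e_{h}^{1}=O(k^{3})$ is not delivered by the scheme's actual start $(\ref{31})$, which is $\widetilde{w}_{1}=w_{0}+kw_{1}$ and by $(\ref{32})$ is only $O(k^{2})$ accurate. The term $k^{-2}\|\theta^{1}-\theta^{0}\|_{\bar{0}}^{2}$ then contributes $O(k^{2})$, i.e.\ only first order in time after taking square roots, unless the start is upgraded to $w_{0}+kw_{1}+\frac{k^{2}}{2}w_{2t}^{0}$ with $w_{2t}^{0}$ read off from the first equation of $(\ref{1})$. The paper's own treatment of this term (bounding $\|w_{h}^{1}-w^{1}\|_{\bar{0}}^{2}$ by $2(\|w_{h}^{1}-\widehat{w}\|_{\bar{0}}^{2}+\|\widehat{w}-w^{1}\|_{\bar{0}}^{2})$ and then minimizing each summand separately over $\widehat{w}$) is circular and silently discards the temporal part of $e_{h}^{1}$, so your version is no worse off; but to actually close the asserted $O(k^{2})$ rate you must either modify the initialization or supply the missing verification rather than defer it.
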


     \begin{proof}
      Subtracting equation $(\ref{27})$ from approximation $(\ref{28})$, rearranging terms and taking $v=e_{h}^{n+1}-e_{h}^{n-1}\in\mathcal{W}$ to obtain
      \begin{equation}\label{58}
      \left(e_{h}^{n+1}-2e_{h}^{n}+e_{h}^{n-1},e_{h}^{n+1}-e_{h}^{n-1}\right)_{\bar{0}}=-\frac{k^{2}}{\nu}A(e_{h}^{n},e_{h}^{n+1}-e_{h}^{n-1})+
      \left(\bar{O}(k^{4}),e_{h}^{n+1}-e_{h}^{n-1}\right)_{\bar{0}}.
     \end{equation}
      It is not difficult to show that
     \begin{equation}\label{59}
      \left(e_{h}^{n+1}-2e_{h}^{n}+e_{h}^{n-1},e_{h}^{n+1}-e_{h}^{n-1}\right)_{\bar{0}}=\|e_{h}^{n+1}-e_{h}^{n}\|_{\bar{0}}^{2}-\|e_{h}^{n}-e_{h}^{n-1}\|_{\bar{0}}^{2},
     \end{equation}
     and
     \begin{equation}\label{60}
     -A(e_{h}^{n},e_{h}^{n+1}-e_{h}^{n-1})]=\frac{1}{2}(\||e_{h}^{n+1}-e_{h}^{n}|\|_{A}^{2}-\||e_{h}^{n}-e_{h}^{n-1}|\|_{A}^{2})-\frac{1}{2}(\||e_{h}^{n+1}|\|_{A}^{2}-
     \||e_{h}^{n}|\|_{A}^{2})-\frac{1}{2}(\||e_{h}^{n}|\|_{A}^{2}-\||e_{h}^{n-1}|\|_{A}^{2}).
     \end{equation}

     But there exists a positive constant $C_{4}$ independent of the time step $k$ and mesh grid $h$ so that $\||\bar{O}(k^{4})|\|_{\bar{0}}\leq C_{4}k^{4}$. Applying the Cauchy-Schwarz and triangular inequalities, direct calculations yield
     \begin{equation}\label{61}
     \left(\bar{O}(k^{4}),e_{h}^{n+1}-e_{h}^{n-1}\right)_{\bar{0}}\leq C_{4}^{2}k^{7}+\frac{k}{4}\|e_{h}^{n+1}-e_{h}^{n-1}\|_{\bar{0}}^{2}\leq C_{4}^{2}k^{7}+\frac{k}{2}(\|e_{h}^{n+1}-e_{h}^{n}\|_{\bar{0}}^{2}+\|e_{h}^{n}-e_{h}^{n-1}\|_{\bar{0}}^{2}).
     \end{equation}

      Substituting equations $(\ref{59})$-$(\ref{60})$ and inequality $(\ref{61})$ into equation $(\ref{58})$, results in
      \begin{equation*}
      \|e_{h}^{n+1}-e_{h}^{n}\|_{\bar{0}}^{2}-\|e_{h}^{n}-e_{h}^{n-1}\|_{\bar{0}}^{2}\leq \frac{k^{2}}{2\nu}\left[(\||e_{h}^{n+1}-e_{h}^{n}|\|_{A}^{2}-
      \||e_{h}^{n}-e_{h}^{n-1}|\|_{A}^{2})-(\||e_{h}^{n+1}|\|_{A}^{2}-\||e_{h}^{n}|\|_{A}^{2})-\right.
     \end{equation*}
     \begin{equation*}
      \left.(\||e_{h}^{n}|\|_{A}^{2}-\||e_{h}^{n-1}|\|_{A}^{2})\right]+C_{4}^{2}k^{7}+\frac{k}{2}(\|e_{h}^{n+1}-e_{h}^{n}\|_{\bar{0}}^{2}+
      \|e_{h}^{n}-e_{h}^{n-1}\|_{\bar{0}}^{2}).
     \end{equation*}

      Summing up for $l=1,2,...,n$, observing that
     \begin{equation*}
      \underset{p=1}{\overset{n}\sum}(\|e_{h}^{l+1}-e_{h}^{l}\|_{\bar{0}}^{2}+\|e_{h}^{l}-e_{h}^{l-1}\|_{\bar{0}}^{2})\leq 2\underset{l=0}{\overset{n}\sum}
      \|e_{h}^{l+1}-e_{h}^{l}\|_{\bar{0}}^{2},
     \end{equation*}
     and rearranging term, this provides
     \begin{equation*}
      \|e_{h}^{n+1}-e_{h}^{n}\|_{\bar{0}}^{2}-\frac{k^{2}}{2\nu}\||e_{h}^{n+1}-e_{h}^{n}|\|_{A}^{2}+\frac{k^{2}}{2\nu}\left(\||e_{h}^{n+1}|\|_{A}^{2}+\||e_{h}^{n}|\|_{A}^{2}+
      \||e_{h}^{1}-e_{h}^{0}|\|_{A}^{2}\right)\leq \|e_{h}^{1}-e_{h}^{0}\|_{\bar{0}}^{2}+
     \end{equation*}
     \begin{equation}\label{62}
      \frac{k^{2}}{2\nu}\left(\||e_{h}^{1}|\|_{A}^{2}+\||e_{h}^{0}|\|_{A}^{2}\right)+k\underset{l=0}{\overset{n}\sum}\|e_{h}^{l+1}-e_{h}^{l}\|_{\bar{0}}^{2}+C_{4}^{2}T_{f}k^{6}.
     \end{equation}

     Using the time step limitation $(\ref{sr})$, one easily proves as in estimate $(\ref{52})$ that
     \begin{equation}\label{63}
     -\frac{C_{sr}^{2}}{2\nu}\|e_{h}^{n+1}-e_{h}^{n}\|_{\bar{0}}^{2}\leq -\frac{C_{1}C_{sr}^{2}h^{2}}{2\nu}\||e_{h}^{n+1}-e_{h}^{n}|\|_{A}^{2}\leq -\frac{C_{1}k^{2}}{2\nu}\||e_{h}^{n+1}-e_{h}^{n}|\|_{A}^{2},
     \end{equation}
     where $C_{1}=\frac{C_{pf}^{-2}}{18(\lambda+\mu)}$. Multiplying inequality $(\ref{62})$ by $C_{1}$ and utilizing estimate $(\ref{63})$ give
     \begin{equation*}
     \|e_{h}^{n+1}-e_{h}^{n}\|_{\bar{0}}^{2}+\frac{C_{1}k^{2}}{2\gamma_{0}\nu}\left(\||e_{h}^{n+1}|\|_{A}^{2}+\||e_{h}^{n}|\|_{A}^{2}+
      \||e_{h}^{1}-e_{h}^{0}|\|_{A}^{2}\right)\leq C_{1}\gamma_{0}^{-1}\|e_{h}^{1}-e_{h}^{0}\|_{\bar{0}}^{2}+
     \end{equation*}
     \begin{equation*}
      \frac{C_{1}k^{2}}{2\gamma_{0}\nu}\left(\||e_{h}^{1}|\|_{A}^{2}+\||e_{h}^{0}|\|_{A}^{2}\right)+C_{1}\gamma_{0}^{-1}k\underset{l=0}{\overset{n}\sum}
      \|e_{h}^{l+1}-e_{h}^{l}\|_{\bar{0}}^{2}+C_{1}C_{4}^{2}\gamma_{0}^{-1}T_{f}k^{6},
     \end{equation*}
     where $\gamma_{0}=1-\frac{C_{sr}^{2}}{2\nu}>0$, since $0<C_{sr}<\sqrt{2\nu}$. For small values of $k$ satisfying the time step requirement $(\ref{sr})$, the application of the discrete Gronwall inequality results in
     \begin{equation*}
     \|e_{h}^{n+1}-e_{h}^{n}\|_{\bar{0}}^{2}+\frac{C_{1}k^{2}}{2\gamma_{0}\nu}\left(\||e_{h}^{n+1}|\|_{A}^{2}+\||e_{h}^{n}|\|_{A}^{2}+
      \||e_{h}^{1}-e_{h}^{0}|\|_{A}^{2}\right)\leq \frac{C_{1}}{2\gamma_{0}\nu}\left[2\nu\|e_{h}^{1}-e_{h}^{0}\|_{\bar{0}}^{2}+\right.
     \end{equation*}
     \begin{equation}\label{64}
      \left.k^{2}\left(\||e_{h}^{1}|\|_{A}^{2}+\||e_{h}^{0}|\|_{A}^{2}\right)+2\nu C_{4}^{2}T_{f}k^{6}\right]\exp(C_{1}\gamma_{0}^{-1}T_{f}).
     \end{equation}

      As in the proof of Theorem $\ref{t1}$, it is not hard to show that
     \begin{equation*}
      k^{2}\min\left\{1,\frac{C_{1}}{2C_{2}\gamma_{0}\nu}\right\}\|e_{h}^{n+1}\|_{\bar{0}}^{2}\leq 2\|e_{h}^{n+1}-e_{h}^{n}\|_{\bar{0}}^{2}+
      \frac{C_{1}k^{2}}{\gamma_{0}\nu}\|e_{h}^{n}\|_{A}^{2}.
     \end{equation*}

     Utilizing this fact and performing direct computations, estimate $(\ref{64})$ implies
      \begin{equation*}
     \|e_{h}^{n+1}\|_{\bar{0}}^{2}+\frac{C_{1}}{\gamma_{0}\gamma_{1}\nu}\left(\||e_{h}^{n+1}|\|_{A}^{2}
     +\||e_{h}^{1}-e_{h}^{0}|\|_{A}^{2}\right)\leq \frac{C_{1}k^{-2}}{\gamma_{0}\gamma_{1}\nu}\left[2\nu\|e_{h}^{1}-e_{h}^{0}\|_{\bar{0}}^{2}+\right.
     \end{equation*}
     \begin{equation*}
      \left.k^{2}\left(\||e_{h}^{1}|\|_{A}^{2}+\||e_{h}^{0}|\|_{A}^{2}\right)+2\nu C_{4}^{2}T_{f}k^{6}\right]\exp(C_{1}\gamma_{0}^{-1}T_{f}),
     \end{equation*}
      where $\gamma_{1}=\min\left\{1,\frac{C_{1}}{2C_{2}\gamma_{0}\nu}\right\}$. But, it follows from the initial conditions $(\ref{29})$ that $w^{0}=w_{h}^{0}=w_{0}$, so $e_{h}^{0}=\vec{0}$, where $\vec{0}$ means the zero vector of $\mathcal{W}$, $\||e_{h}^{1}-e_{h}^{0}|\|_{A}=\||e_{h}^{1}|\|_{A}$ and $\|e_{h}^{1}-e_{h}^{0}\|_{\bar{0}}=\|e_{h}^{1}\|_{\bar{0}}$. Adding the term $-\frac{C_{1}}{\gamma_{0}\gamma_{1}\nu}\||e_{h}^{1}|\|_{A}^{2}$ in both sides of this estimate to obtain
      \begin{equation}\label{65}
     \|w_{h}^{n+1}-w^{n+1}\|_{\bar{0}}^{2}+\frac{C_{1}}{\gamma_{0}\gamma_{1}\nu}\||w_{h}^{n+1}-w^{n+1}|\|_{A}^{2}\leq \frac{2C_{1}}{\gamma_{0}\gamma_{1}}
     \left[k^{-2}\|w_{h}^{1}-w^{1}\|_{\bar{0}}^{2}+C_{4}^{2}T_{f}k^{4}\right]\exp(C_{1}\gamma_{0}^{-1}T_{f}).
     \end{equation}

     Let $\widehat{w}\in \mathcal{W}_{h}$ be an arbitrary vector. So $\|w_{h}^{1}-w^{1}\|_{\bar{0}}^{2}\leq 2(\|w_{h}^{1}-\widehat{w}\|_{\bar{0}}^{2}+
     \|\widehat{w}-w^{1})\|_{\bar{0}}^{2})$. This fact combined with estimate $(\ref{65})$ imply
     \begin{equation*}
     \|w_{h}^{n+1}-w^{n+1}\|_{\bar{0}}^{2}+\frac{C_{1}}{\gamma_{0}\gamma_{1}\nu}\||w_{h}^{n+1}-w^{n+1}|\|_{A}^{2}\leq \frac{4C_{1}}{\gamma_{0}\gamma_{1}}
     \left[k^{-2}(\|w_{h}^{1}-\widehat{w}\|_{\bar{0}}^{2}+\|\widehat{w}-w^{1})\|_{\bar{0}}^{2})+\frac{1}{2}C_{4}^{2}T_{f}k^{4}\right]\exp(C_{1}\gamma_{0}^{-1}T_{f}).
     \end{equation*}

     Since $\underset{\widehat{w}\in \mathcal{W}_{h}}{\inf}\|w_{h}^{1}-\widehat{w}\|_{\bar{0}}^{2}=0$, taking the infimum over $\widehat{w}\in \mathcal{W}_{h}$ of this inequality and using estimate $(\ref{57})$, this results in
     \begin{equation*}
     \|w_{h}^{n+1}-w^{n+1}\|_{\bar{0}}^{2}+\frac{C_{1}}{\gamma_{0}\gamma_{1}\nu}\||w_{h}^{n+1}-w^{n+1}|\|_{A}^{2}\leq \frac{4C_{1}}{\gamma_{0}\gamma_{1}}
     \left[C_{3}^{2}k^{-2}h^{2p+2}\|w^{1}\|_{\overline{p+1}}^{2}+\frac{1}{2}C_{4}^{2}T_{f}k^{4}\right]\exp(C_{1}\gamma_{0}^{-1}T_{f}).
     \end{equation*}
     Utilizing the time step restriction $(\ref{sr})$ corresponding to the extreme case (i.e., $k=C_{sr}h$), this estimate becomes
     \begin{equation*}
     \|w_{h}^{n+1}-w^{n+1}\|_{\bar{0}}^{2}+\frac{C_{1}}{\gamma_{0}\gamma_{1}\nu}\||w_{h}^{n+1}-w^{n+1}|\|_{A}^{2}\leq \frac{4C_{1}}{\gamma_{0}\gamma_{1}}
     \left[(C_{3}C_{sr}^{-1})^{2}h^{2p}\|w^{1}\|_{\overline{p+1}}^{2}+\frac{1}{2}C_{4}^{2}T_{f}k^{4}\right]\exp(C_{1}\gamma_{0}^{-1}T_{f})\leq
     \end{equation*}
     \begin{equation*}
     \widehat{C}(k^{4}+h^{2p})\leq\widehat{C}(k^{2}+h^{p})^{2},
     \end{equation*}
      for $n=0,1,...,N-1$, where all the constants are absorbed into a positive constant $\widehat{C}$. The proof of Theorem $\ref{t2}$ is completed.
       \end{proof}

      \section{Numerical experiments}\label{sec4}

      This section simulates a modified Lax-Wendroff/interpolation approach with finite element method $(\ref{28})$-$(\ref{31})$ for solving a three-dimensional system of geological structure deformation problem $(\ref{1})$ subjects to initial-boundary conditions $(\ref{2})$-$(\ref{3})$. Two numerical examples are performed to confirm the theory and to show the utility and efficiency of the proposed approach $(\ref{28})$-$(\ref{31})$. Additionally, the new algorithm is used to assess and predict landslides occurred in the west and center regions in Cameroon from October $2019$ to November $2024$, namely in Dschang cliff, Mbankolo and Gouache cities. Moreover, the numerical results (displacement $(w_{h})$ and stress tensor $(\kappa_{h})$) obtained from the proposed computational technique using the data taken in these areas should provide useful information on some natural disasters in Cameroon (also worldwide) which would allow people to be informed about the risk zones due to landslides.

      \subsection*{Stability analysis and convergence order}

      To check the stability and convergence order of the proposed computational technique $(\ref{28})$-$(\ref{31})$, we set $h\in\{3^{-l},\text{\,\,}l=3,4,5,6\}$, where $h=\max\{h_{T},\text{\,\,}T\in\Pi_{h}\}$ and $\Pi_{h}$ denotes the triangulation of $\overline{\Omega}$. In addition, a uniform time step $k=3^{-m}$, for $m=5,6,7,8$, is used. We compute the errors: $w_{h}^{n}-w^{n}$ and $\kappa_{h}^{n}-\kappa^{n}$, at time $t_{n}$ using the norms $\||\cdot|\|_{\bar{0},\infty}$ and $\||\cdot|\|_{*,\infty}$, defined as
      \begin{equation*}
       \||w|\|_{\bar{0},\infty}=\underset{0\leq n\leq N}{\max}\|w^{n}\|_{\bar{0}},\text{\,\,\,}\forall w\in\mathcal{W}\text{\,\,\,and\,\,\,}\||\tau|\|_{*,\infty}=\underset{0\leq n\leq N}{\max}\|\tau^{n}\|_{*},\text{\,\,\,}\forall \tau\in\mathcal{M}.
         \end{equation*}
       Since the three-dimensional system of elastodynamic equations $(\ref{1})$ is complex, determine the analytical solution of the initial-boundary value problem $(\ref{1})$-$(\ref{3})$ is too difficult and sometimes impossible. Thus, the temporal errors are computed assuming that the analytical solution equals to the numerical solution with the time step $k=3^{-9}$, while the spatial errors are calculated assuming that the exact solution is the approximate one obtained with the time step $k=3^{-7}$. Finally, the space convergence order, $CO(h)$, of the developed numerical scheme is estimated using the formula
         \begin{equation*}
          CO(h)=\frac{\log\left(\frac{\||w_{3h}-w|\|_{\bar{0},\infty}}{\||w_{h}-w|\|_{\bar{0},\infty}}\right)}{\log(3)},\text{\,\,}
          \frac{\log\left(\frac{\||\tau_{3h}-\tau|\|_{*,\infty}}{\||\tau_{h}-\tau|\|_{*,\infty}}\right)}{\log(3)},
         \end{equation*}
          where, $z_{h}$ and $z_{3h}$ are the spatial errors associated with the grid sizes $h$ and $3h$, respectively, whereas the convergence rate in time, $CO(k)$, is computed utilizing the formula
         \begin{equation*}
          CO(k)=\frac{\log\left(\frac{\||w_{3k}-w|\|_{\bar{0},\infty}}{\||w_{k}-w|\|_{\bar{0},\infty}}\right)}{\log(3)},\text{\,\,}
          \frac{\log\left(\frac{\||\tau_{3k}-\tau|\|_{*,\infty}}{\||\tau_{k}-\tau|\|_{*,\infty}}\right)}{\log(3)},
         \end{equation*}
         where $z_{3k}$ and $z_{k}$ represent the errors in time corresponding to time steps $3k$ and $k$, respectively. It's worth recalling that the numerical computations are performed utilizing MATLAB R$2007b$ with built-in function $plot(\cdot)$ and $surf(\cdot)$ while the CPU is obtained using the command "cputime". Operating system and Hardware specifications of the computer: Window 11 Home Single Language, $64$-bit operating system, $x64$-based processor, RAM (4.00 GB) and 11th Gen Intel(R) Core(TM) $i3$-$1115G4@3.00$GHz $3.00$GHz.\\

          $\bullet$ \textbf{Example 1}. We consider the three-dimensional system of tectonic deformation equations $(\ref{1})$ defined on the domain $\overline{\Omega}=[0,\text{\,}1]^{3}$. The final time $T_{f}=1$. The values of physical parameters are: $\nu=1$, $\alpha=1$, $E=2.5$,  $\mu=\frac{ E}{2(1+\alpha)}=0.625$, $\lambda=\frac{\alpha E}{(1+\alpha)(1-2\alpha)}=-1.25$ and $g_{c}=\frac{1}{\pi}$. The function $g_{0}(x)=[\sin(\pi x_{1})\sin(\pi x_{2})\sin(\pi x_{3})]^{2}$. The initial and boundary conditions are given as: $w_{0}=w_{1}=\vec{0}$, on $\overline{\Omega}$ and $w(t)|_{\Gamma}=\vec{0}$, for every $t\in[0,\text{\,}1]$. We assume that the landslides started at the focus $B(x_{c},\text{\,}r_{0})$, where $x_{c}=(0.5,0.5,0.5)^{t}$ and $r_{0}=3^{-3}$. We set $C_{sr}=1$ be the positive constant defined in estimates $(\ref{sr})$.\\
         \text{\,}\\
         \textbf{Table 1.} $\label{T1}$ Convergence order $CO(h)$ of the constructed modified Lax-Wendroff/interpolation approach with finite element method $(\ref{28})$-$(\ref{31})$ using time step $k=3^{-5}$ and different space steps $h$, satisfying condition $(\ref{sr})$.
          \begin{equation*}
          \begin{array}{c c}
          \text{\,new algorithm,\,\,where\,\,}k=3^{-5}& \\
           \begin{tabular}{ccccccc}
            \hline
            $h$ &  $\||w_{h}-w|\|_{\bar{0},\infty}$ & $CO(h)$ & CPU(s) & $\||\kappa_{h}-\kappa|\|_{*,\infty}$ & $CO(h)$ & CPU(s)\\
             \hline
            $3^{-3}$ &  $1.0521\times10^{-3}$ & .... & 1.3631 & $1.4537\times10^{-4}$ & .... & 4.3967\\

            $3^{-4}$ &  $2.5119\times10^{-4}$ & 3.0021 &  3.5913 & $3.4762\times10^{-5}$ & 2.9987 & 13.1901\\

            $3^{-5}$ &  $6.0536\times10^{-5}$ & 2.9824 &  8.5346 & $8.2992\times10^{-6}$ & 3.0021 & 28.1213\\

            $3^{-6}$ &  $1.3929\times10^{-5}$ & 3.0794 & 19.8353 & $1.9388\times10^{-6}$ & 3.0476 & 84.3639 \\
            \hline
          \end{tabular} &
          \end{array}
          \end{equation*}
          \text{\,}\\
          $\bullet$ \textbf{Example 2}. Consider the three-dimensional system of geological structure deformation problem $(\ref{1})$ defined on the region $\overline{\Omega}\times[0,\text{\,}T_{f}]=[-1,\text{\,}1]^{3}\times[0,\text{\,}2]$. The physical parameters are given as: $\nu=1$, $\alpha=0.25$, $E=2.5$,  $\mu=\frac{ E}{2(1+\alpha)}=1$, $\lambda=\frac{\alpha E}{(1+\alpha)(1-2\alpha)}=1$ and $g_{c}=\frac{1}{\pi}$. The function $g_{0}(x)=[\sin(\pi x_{1})\sin(\pi x_{2})\sin(\pi x_{3})]^{2}$. Both initial and boundary conditions are defined as: $w_{0}=w_{1}=\vec{0}$, on $\overline{\Omega}$ and $w(t)|_{\Gamma}=\vec{0}$, for every $t\in[0,\text{\,}2]$. It is assumed that the landslides started at the focus $B(x_{c},\text{\,}r_{0})$, where $x_{c}=(0,0,0)^{t}$ and $r_{0}=3^{-3}$. We take $C_{sr}=3^{-1}$ be the positive constant defined in estimates $(\ref{sr})$.\\

          \textbf{Table 2.} $\label{T2}$ Convergence order $CO(k)$ of the proposed computational technique $(\ref{28})$-$(\ref{31})$ with space step $h=3^{-3}$ and varying time steps $k$, satisfying requirement $(\ref{sr})$.
           \begin{equation*}
          \begin{array}{c c}
          \text{\,new algorithm,\,\,where\,\,}h=3^{-3}& \\
           \begin{tabular}{ccccccc}
            \hline
            $k$ &  $\||w_{h}-w|\|_{\bar{0},\infty}$ & $CO(k)$ & CPU(s) & $\||\kappa_{h}-\kappa|\|_{*,\infty}$ & $CO(k)$ & CPU(s)\\
             \hline
            $3^{-5}$ &  $1.8279\times10^{-3}$ & .... & 1.4607  & $8.0203\times10^{-5}$ & .... & 3.8438\\

            $3^{-6}$ &  $7.0450\times10^{-4}$ & 1.9983 & 3.6820  & $3.0885\times10^{-5}$ & 2.0001 & 12.2117\\

            $3^{-7}$ &  $2.6635\times10^{-4}$ & 2.0386 & 8.2791  & $1.2018\times10^{-5}$ & 1.9783 & 27.4071\\

            $3^{-8}$ &  $9.5766\times10^{-5}$ & 2.1439 & 19.6492 & $4.3897\times10^{-6}$ & 2.1109 & 78.7666\\
            \hline
          \end{tabular} &
          \end{array}
          \end{equation*}
          \text{\,}\\
          Tables 1 $\&$ 2 show that the developed approach is spatial three-order accurate and second-order convergent in time.

        \subsection*{Landslides analysis in Dschang cliff, Mbankolo and Gouache (in Cameroon) from October $2019$ to November $2024$}

         The landslides model for this three-dimensional system of geological structure deformation is described as follows: the study areas are located in the west and center regions in Cameroon, specifically in Dschang (Dschang cliff), Yaounde (Mbankolo) and Bafoussam (Gouache district). The Dschang cliff has an elevation of $1400m$ (latitude $5^{o}24'18"N$, longitude $10^{o}00'57"E$) together with an annual rainfall intensity equals $2200mm$/year and a slope action of $7.6\%$. Some characteristics of the landscape consider the north-east valleys separated with plains (Figure $\ref{fig1}$, Figures 1(c) $\&$ 1(e)) while the Mbankolo geographical coordinates are between $3^{o}54'20"N$ to $11^{o}20'20"E$ for the hazard point upstream at altitude $823.8m$ and $3^{o}54'36"N$ to $11^{o}29'27E$ for the hazard point downstream at altitude $780m$. The landscape is characterized by north-east valleys separated with structural plains (Figure $\ref{fig1}$, Figure 1(f)). The Gouache landscape is characterized by the north-east valleys separated with mountains whereas the surrounding reliefs of the hazard site are at altitude nearly $1532$m (latitude $5^{o}24'18"N$, longitude $10^{o}00'57"E$). In $2019$, a rainfall's rise has been recorded from January to September in the landslide site with highest values of $592mm$ in September and $544mm$ in October while the lowest rainfall assessed in December of this year was indicating $0.5mm$ (Figure $\ref{fig1}$, Figure 1(g)). In the simulations, the initial and boundary conditions for displacement are equal to zero; the constant $C_{sr}$ in the time step limitation $(\ref{sr})$ equals $3^{-1}$, the mesh size in $x$-, $y$- and $z$-directions $h=3^{-3}$ and time step $k=3^{-5}$ are chosen so that requirement $(\ref{sr})$ is satisfied. The periods of events are represented by the time interval $[0,\text{\,}T_{f}]$ (time in days) where: (a) $T_{f}=3$ corresponds to $5$-$7$ November $2024$ (Dschang cliff landslides), (b) $T_{f}=3$ indicates $6$-$8$ October $2023$ (Mbankolo landslides) and (c) $T_{f}=1$ denotes $29$ October $2024$ (Gouache landslides). We should compute the norm of the displacement, $\||w_{h}|\|_{\bar{0},\infty}$, and the one of the symmetric stress tensor elements, $\||\kappa_{h,ij}|\|_{0,\infty}$, for $i,j=1,2,3$. It's important to remind that $\kappa_{h,ij}$=$\kappa_{h,ji}$. Because of the graphs similarity and for the sake of reducing the paper length, for each discussed case we displayed in the figures only one component of displacement and two elements of the symmetric stress tensor. Indeed, display three components of displacement, six elements of stress tensor and the graphs related to stability and convergence of the proposed numerical approach $(\ref{28})$-$(\ref{31})$ should result in more than $46$ graphs which seem to be too much.\\

        $\bullet$ \textbf{Example 3}. Consider the three-dimensional system of elastodynamic equations $(\ref{1})$ defined on $\overline{\Omega}=[-1,\text{\,}1]^{3}$, with final time $T_{f}$. The physical parameters are chosen as: $\nu=1$, $\alpha=0.25$, $E=2.5$, $\mu=\frac{ E}{2(1+\alpha)}=1$, $\lambda=\frac{\alpha E}{(1+\alpha)(1-2\alpha)}=1$ and $C_{sr}=3^{-1}$. The function $g_{0}(x)=[\sin(\pi x_{1})\sin(\pi x_{2})\sin(\pi x_{3})]^{2}$. The initial and boundary conditions are defined as: $w_{0}=w_{1}=\vec{0}$, $\kappa_{11}^{0}=2\times10^{-2},$ $\kappa_{22}^{0}=3\times10^{-1}$, $\kappa_{33}^{0}=1.5\times10^{-2}$, $\kappa_{12}^{0}=0.5\times10^{-3}$, $\kappa_{13}^{0}=0.75\times10^{-3}$, $\kappa_{23}^{0}=1.25\times10^{-3}$, on $\overline{\Omega}$ and $w(t)|_{\Gamma}=\vec{0}$, for every $t\in[0,\text{\,}T_{f}]$. It's assumed that the landslides started at the focus $B(x_{c},\text{\,}r_{0})$, where $x_{c}=(0,0,0)^{t}$, $r_{0}=3^{-2}$ and "$\times$" denotes the usual multiplication in $\mathbb{R}$.\\

         \textbf{Table 3.} $\label{T3}$ Displacement $w_{h}$ and symmetric stress tensor $\kappa_{h}=(\kappa_{h,ij})$, for $1\leq i,j\leq3$, corresponding to Dschang cliff landslides (from November $5$-$7$, $2024$) provided by the new algorithm utilizing a slope action $g_{c}=7.6\%$ and space step $h=3^{-3}$, where $T_{f}=3$ days.
          \begin{equation*}
          \begin{array}{c }
          \text{\,Developed approach,\,\,where\,\,}k=3^{-5} \\
           \begin{tabular}{cccccccc}
            \hline
            $h$ &  $\||w_{h}|\|_{\bar{0},\infty}$ & $\||\kappa_{h,11}|\|_{0,\infty}$ & $\||\kappa_{h,22}|\|_{0,\infty}$ & $\||\kappa_{h,33}|\|_{0,\infty,}$ & $\||\kappa_{h,12}|\|_{0,\infty}$ & $\||\kappa_{h,13}|\|_{0,\infty}$ & $\||\kappa_{h,23}|\|_{0,\infty}$\\
             \hline
            $3^{-3}$ & $0.1182$ & $0.6632$ & $0.8961$ & $4.2313$ & $0.0269$ & $0.2708$ & $0.3203$\\
            \hline
          \end{tabular}
          \end{array}
          \end{equation*}
           In addition, the symmetric stress tensor is given by
          \begin{equation*}
           \kappa_{h}=\begin{bmatrix}
              0.6632 & 0.0269 & 0.2708 \\
              0.0269 & 0.8961 & 0.3203 \\
              0.2708 & 0.3203 & 4.2313 \\
            \end{bmatrix}
          \end{equation*}
          \text{\,}\\
            \textbf{Table 4.} $\label{T4}$ Displacement $w_{h}$ and symmetric stress tensor $\kappa_{h}=(\kappa_{h,ij})$, for $1\leq i,j\leq3$, associated with Dschang cliff landslides (from November $5$-$7$, $2024$) provided by the new algorithm using altitude $g_{c}=1450-750=700m=0.7km$, space step $h=3^{-3}$ and time step $k=3^{-5}$, where $T_{f}=3$ days.
          \begin{equation*}
          \begin{array}{c }
          \text{\,Developed approach,\,\,where\,\,}k=3^{-5} \\
           \begin{tabular}{cccccccc}
            \hline
            $h$ &  $\||w_{h}|\|_{\bar{0},\infty}$ & $\||\kappa_{h,11}|\|_{0,\infty}$ & $\||\kappa_{h,22}|\|_{0,\infty}$ & $\||\kappa_{h,33}|\|_{0,\infty,}$ & $\||\kappa_{h,12}|\|_{0,\infty}$ & $\||\kappa_{h,13}|\|_{0,\infty}$ & $\||\kappa_{h,23}|\|_{0,\infty}$\\
             \hline
            $3^{-3}$ & $0.0783$ & $0.4412$ & $0.8961$ & $2.7684$ & $0.0174$ & $0.1774$ & $0.2092$\\
            \hline
          \end{tabular}
          \end{array}
          \end{equation*}
           Furthermore, the symmetric stress tensor is given by
          \begin{equation*}
           \kappa_{h}=\begin{bmatrix}
              0.4412 & 0.0174 & 0.1774 \\
              0.0174 & 0.8961 & 0.2092 \\
              0.1774 & 0.2092 & 2.7684 \\
            \end{bmatrix}
          \end{equation*}
          \text{\,}\\
            \textbf{Table 5.} $\label{T5}$ Displacement $w_{h}$ and symmetric stress tensor $\kappa_{h}=(\kappa_{h,ij})$, for $1\leq i,j\leq3$, associated with Mbankolo landslides (October $6$-$8$, $2023$) obtained from the new algorithm utilizing altitude $823.8-780=43.8m$ and grid space $h=3^{-3}$, where $T_{f}=3$ days.
          \begin{equation*}
          \begin{array}{c }
          \text{\,Developed approach,\,\,where\,\,}k=3^{-5} \\
           \begin{tabular}{cccccccc}
            \hline
            $h$ &  $\||w_{h}|\|_{\bar{0},\infty}$ & $\||\kappa_{h,11}|\|_{0,\infty}$ & $\||\kappa_{h,22}|\|_{0,\infty}$ & $\||\kappa_{h,33}|\|_{0,\infty,}$ & $\||\kappa_{h,12}|\|_{0,\infty}$ & $\||\kappa_{h,13}|\|_{0,\infty}$ & $\||\kappa_{h,23}|\|_{0,\infty}$\\
             \hline
            $3^{-3}$ & $0.1189$ & $0.6670$ & $0.8961$ & $4.2471$ & $0.0271$ & $0.2725$ & $0.3222$\\
            \hline
          \end{tabular}
          \end{array}
          \end{equation*}
           Additionally, the symmetric stress tensor is given by
          \begin{equation*}
           \kappa_{h}= \begin{bmatrix}
              0.6670 & 0.0271 & 0.2725 \\
              0.0271 & 0.8961 & 0.3222 \\
              0.2725 & 0.3222 & 4.2471 \\
            \end{bmatrix}
          \end{equation*}
          \text{\,}\\
          \textbf{Table 6.} $\label{T6}$ Displacement $w_{h}$ and stress tensor $\kappa_{h}=(\kappa_{h,ij})$, for $1\leq i,j\leq3$, corresponding to Gouache landslides (October $29$, $2019$) provided by the new algorithm using altitude $1532-1323=209m$ and mesh grid $h=3^{-3}$, where $T_{f}=1$ day.
          \begin{equation*}
          \begin{array}{c }
          \text{\,Developed approach,\,\,where\,\,}k=3^{-5} \\
           \begin{tabular}{cccccccc}
            \hline
            $h$ &  $\||w_{h}|\|_{\bar{0},\infty}$ & $\||\kappa_{h,11}|\|_{0,\infty}$ & $\||\kappa_{h,22}|\|_{0,\infty}$ & $\||\kappa_{h,33}|\|_{0,\infty,}$ & $\||\kappa_{h,12}|\|_{0,\infty}$ & $\||\kappa_{h,13}|\|_{0,\infty}$ & $\||\kappa_{h,23}|\|_{0,\infty}$\\
             \hline
            $3^{-3}$ & $0.0132$ & $0.0909$ & $0.8961$ & $0.4738$ & $0.0033$ & $0.0302$ & $0.0356$\\
            \hline
          \end{tabular}
          \end{array}
          \end{equation*}
           In addition, the symmetric stress tensor is given by
           \begin{equation*}
           \kappa_{h}= \begin{bmatrix}
              0.0909 & 0.0033 & 0.0302\\
              0.0033 & 0.8961 & 0.0356\\
              0.0302 & 0.0356 & 0.4738\\
            \end{bmatrix}
          \end{equation*}
          \text{\,}\\
          The approximate solutions obtained from the constructed technique $(\ref{28})$-$(\ref{31})$ are displayed in Figures $\ref{fig2}$. A time step $k=3^{-5}$, and space step $h=3^{-3},$ which satisfy the time step restriction $(\ref{sr})$ are used. Figures $\ref{fig3}$-$\ref{fig4}$ show that the displacement and elements of stress tensor propagate with almost a perfectly value at different positions while the error associated with the displacement approaches zero. Thus, under the time step restriction $(\ref{sr})$, the numerical solutions could not increase with time. In addition, It follows from \textbf{Tables 1-2} that the errors associated with the displacement and stress tensor are second-order in time and spatial third-order. This suggests that the new approach $(\ref{28})$-$(\ref{31})$ is temporal second-order convergent and space third-order accurate. Furthermore, \textbf{Tables 1-2} and Figures $\ref{fig2}$ indicate that the numerical solutions do not increase with time and converge to the analytical one. Specifically, they show that the constructed algorithm $(\ref{28})$-$(\ref{31})$ is not unconditionally unstable, but stability depends on the parameters $h$ and $k$. On the other hand, Tables \textbf{Tables 3-6} present some values of the displacement and symmetric stress tensor elements associated with the Dschang cliff, Mbankolo and Gouache landslides. The information provided by these Tables and the graphs displayed in Figures $\ref{fig3}$-$\ref{fig4}$ indicate that the proposed numerical approach $(\ref{28})$-$(\ref{31})$ should be considered as a powerful tool to predict runout and delineate the hazardous landslide areas.\\

          \begin{figure}
         \begin{center}
         Stability of the new algorithm for three-dimensional system of tectonic deformation model.
         \begin{tabular}{c c}
         \psfig{file=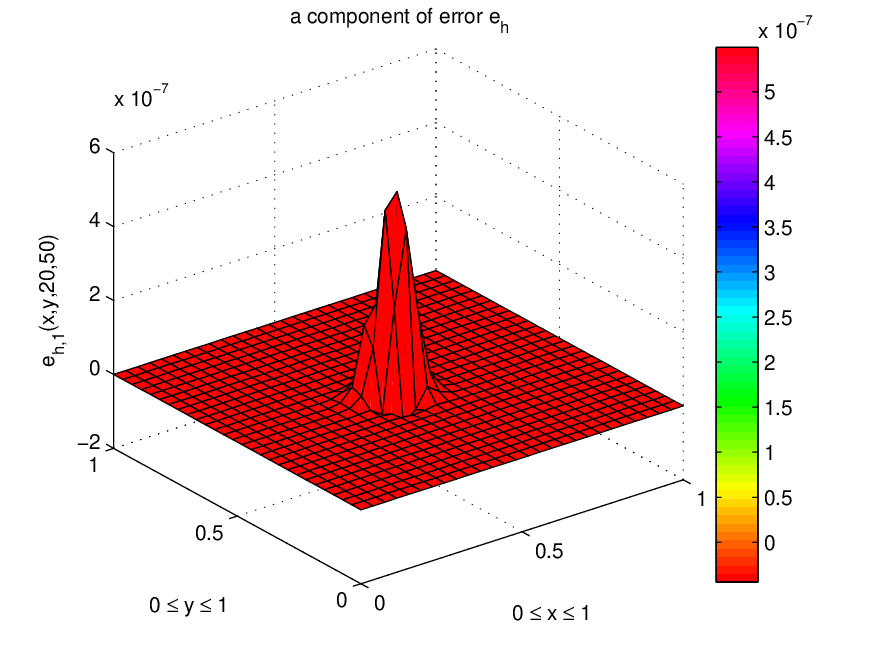,width=7cm} & \psfig{file=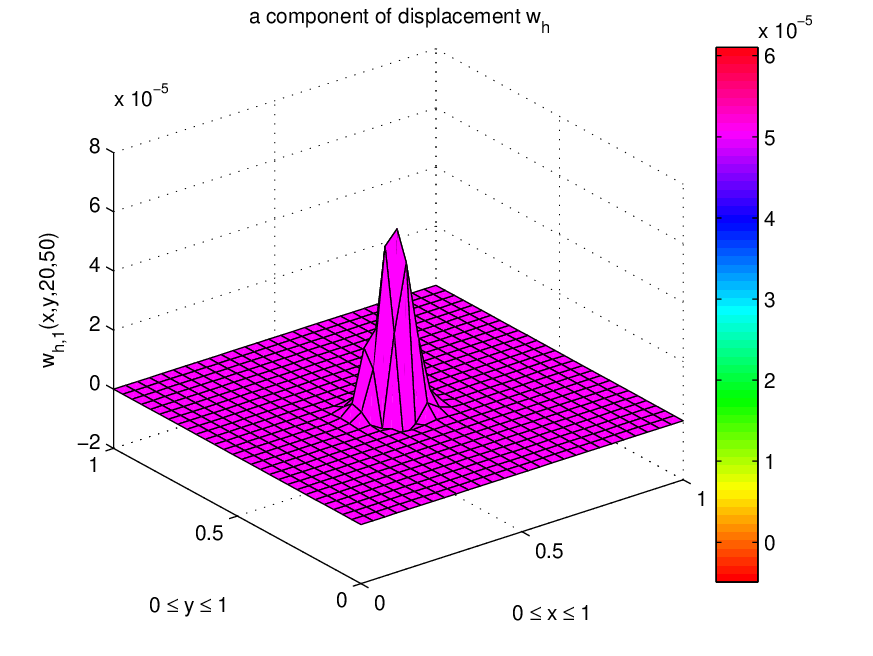,width=7cm}\\
         \psfig{file=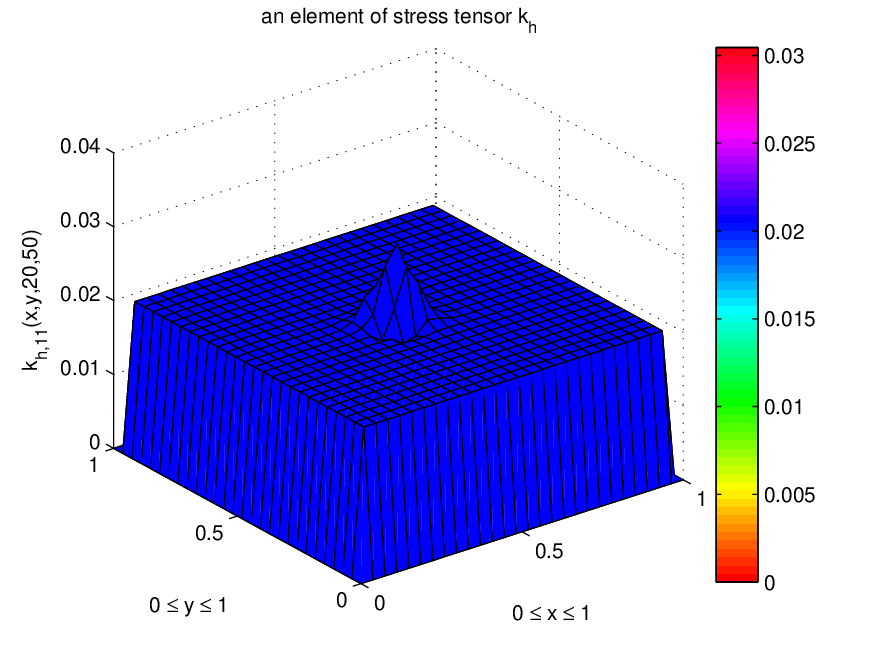,width=7cm} & \psfig{file=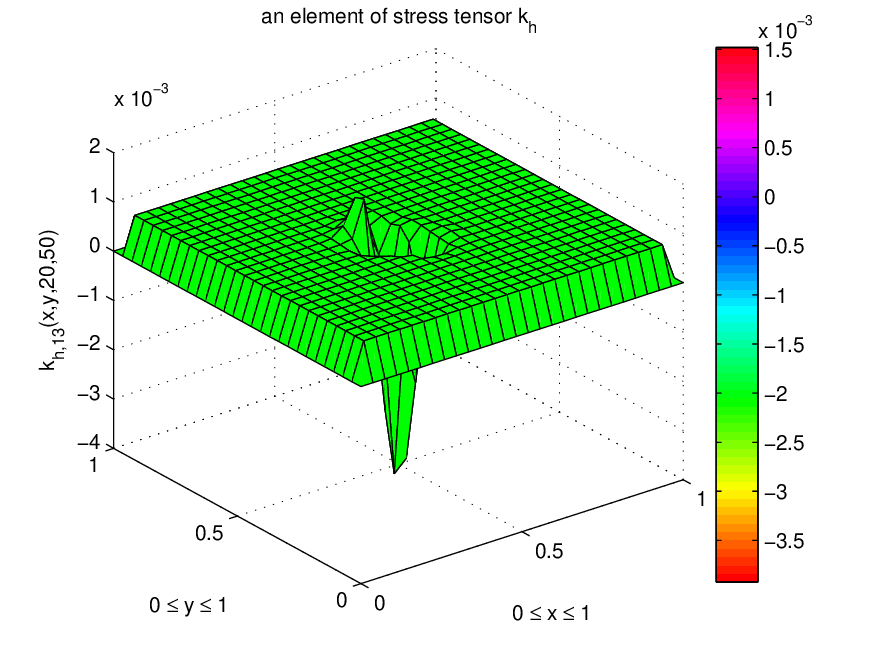,width=7cm}\\
         \psfig{file=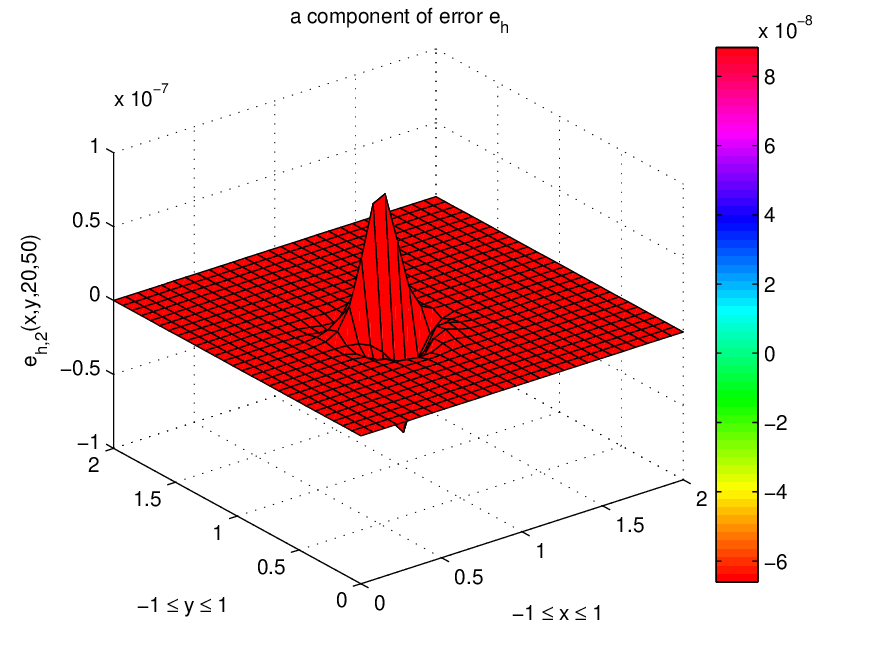,width=7cm} & \psfig{file=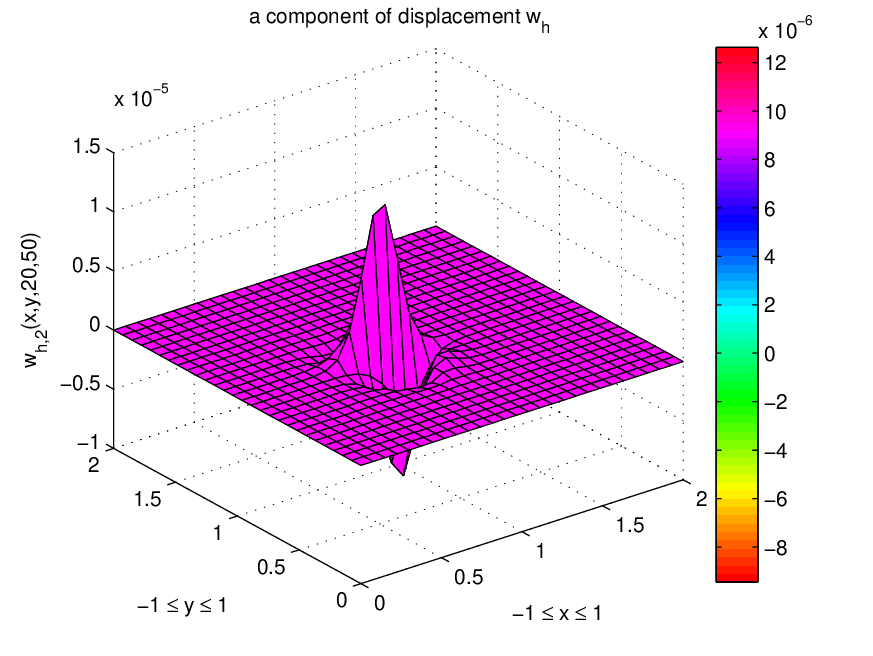,width=7cm}\\
         \psfig{file=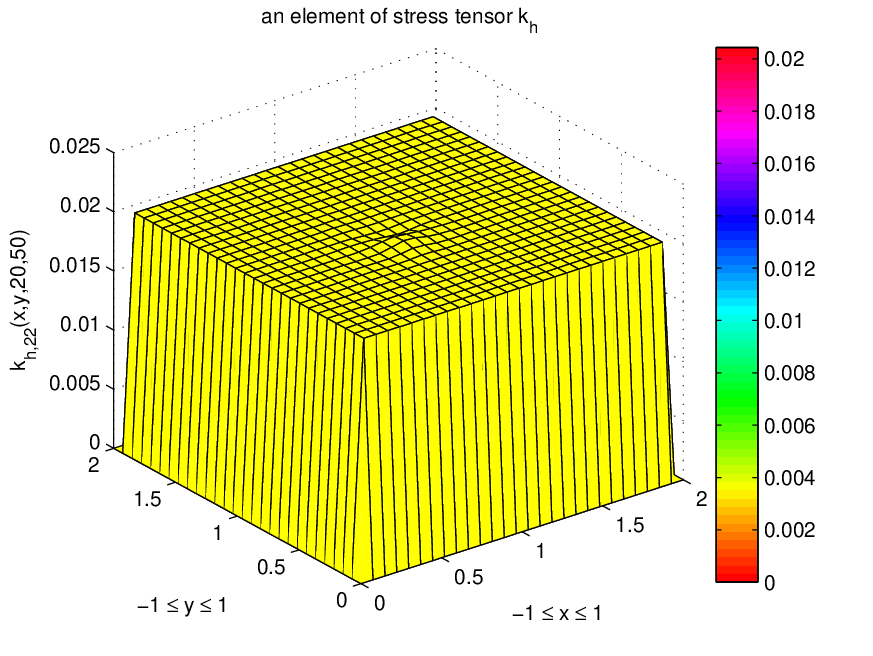,width=7cm} & \psfig{file=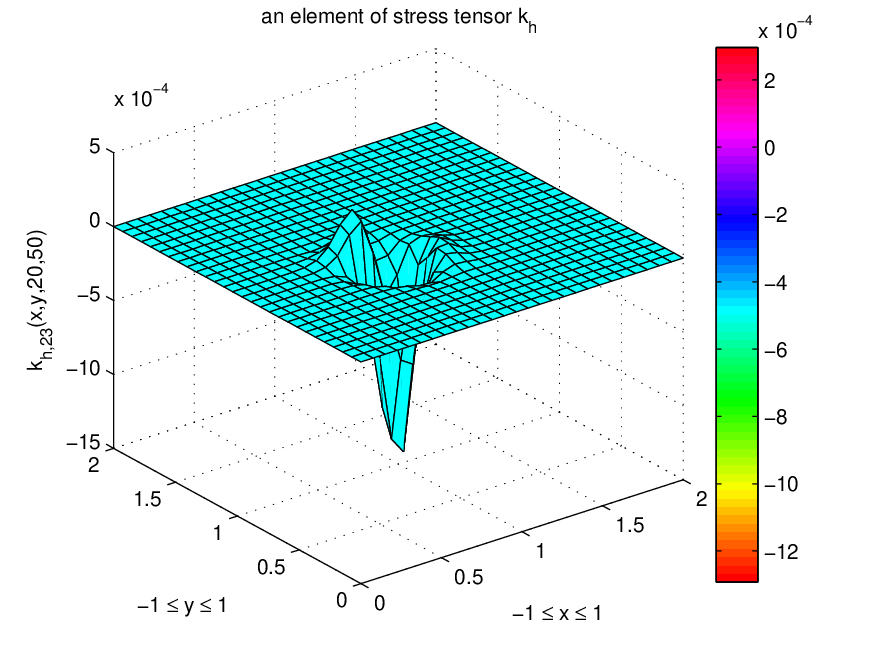,width=7cm}
         \end{tabular}
        \end{center}
        \caption{Graphs of error $(e_{h})$, displacement $(w_{h})$ and stress tensor $(\kappa_{h})$ corresponding to Example 1 (first four figures) and Example 2 (last four figures).}
        \label{fig2}
        \end{figure}

    \begin{figure}
     \begin{center}
       Analysis of Dschang cliff landslides with a slope action: $g_{c}=7.6\%$ and an altitude difference: $g_{c}=0.7km$.
      \begin{tabular}{c c}
         \psfig{file=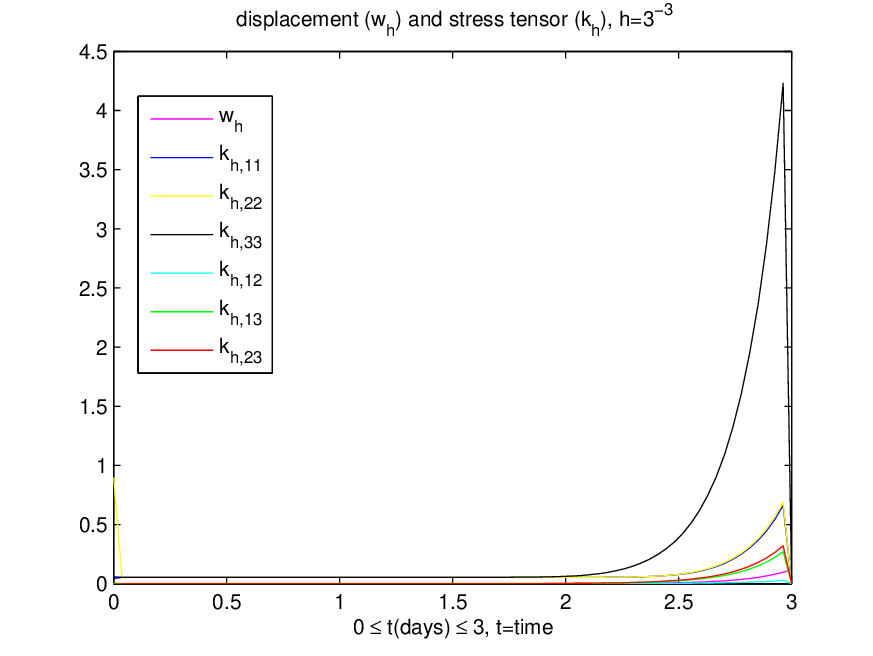,width=7cm} & \psfig{file=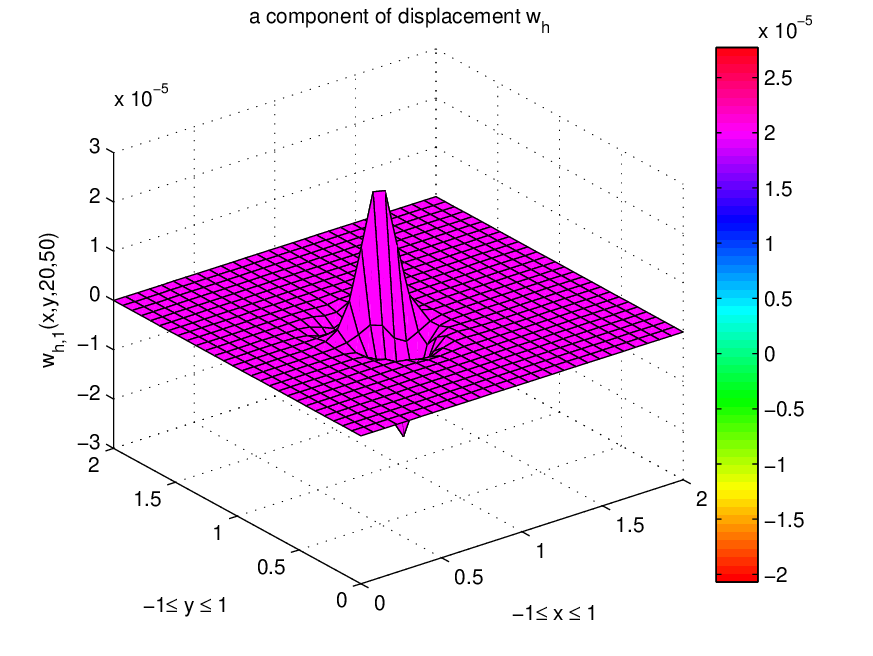,width=7cm}\\
         \psfig{file=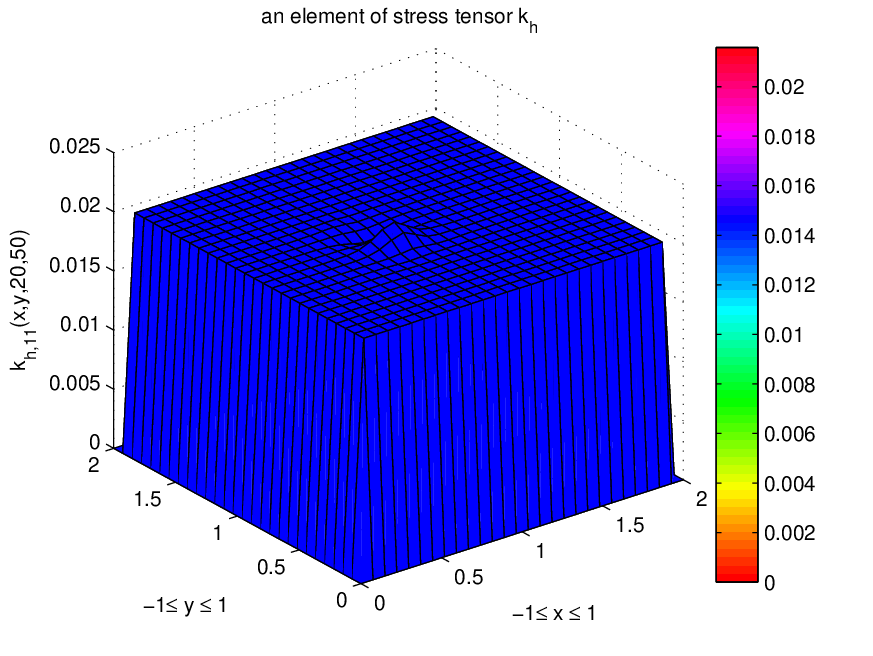,width=7cm} & \psfig{file=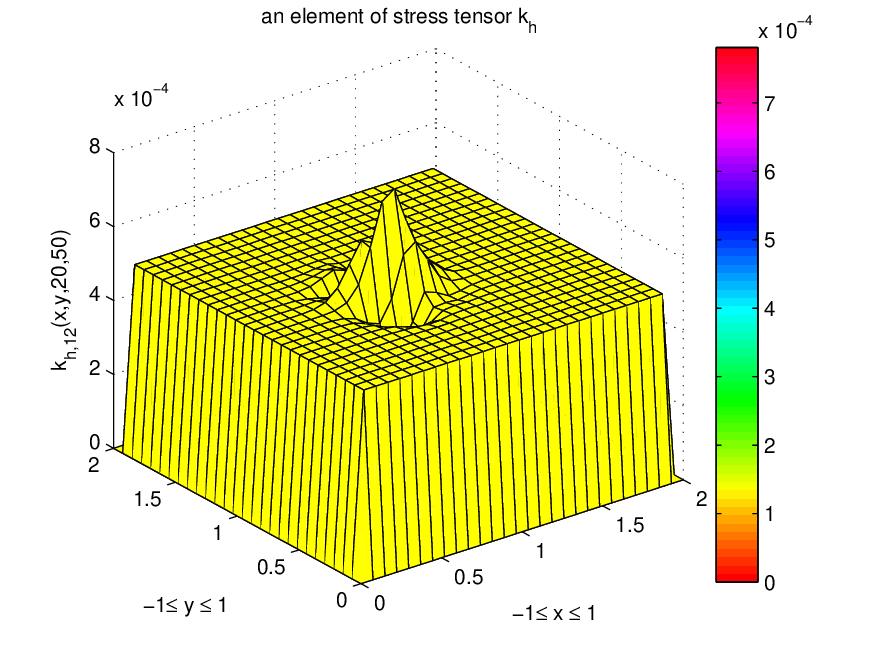,width=7cm}\\
         \psfig{file=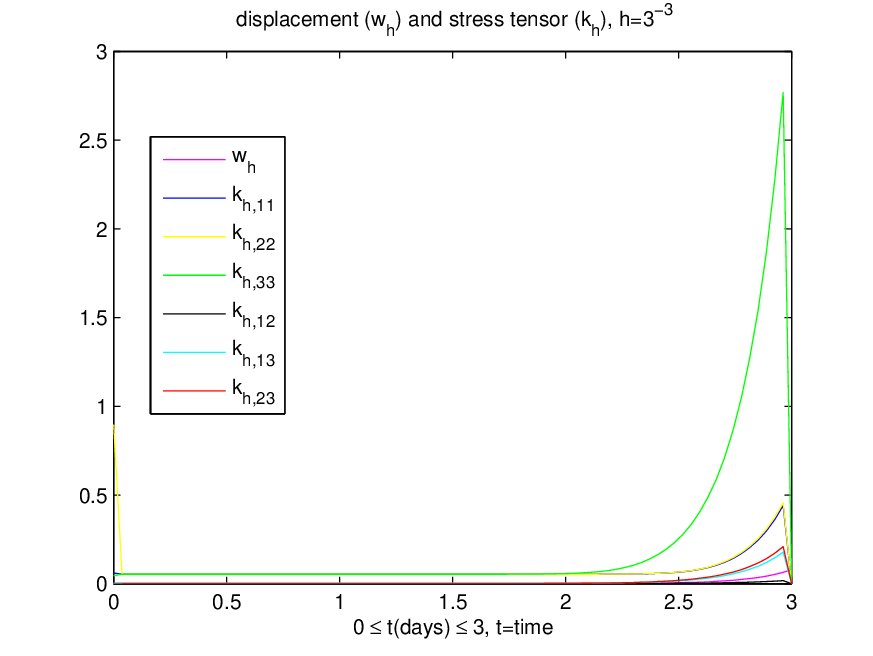,width=7cm} & \psfig{file=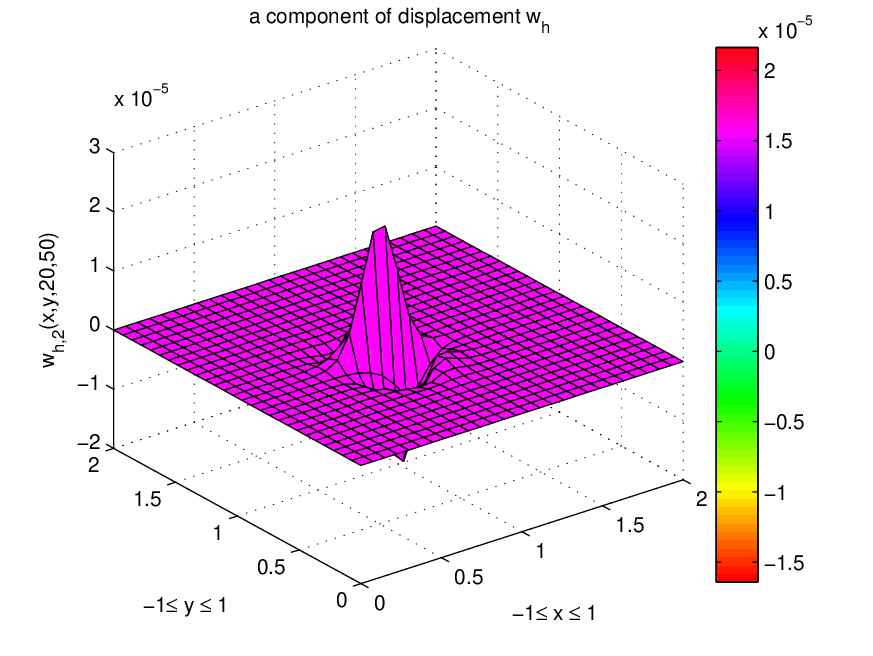,width=7cm}\\
         \psfig{file=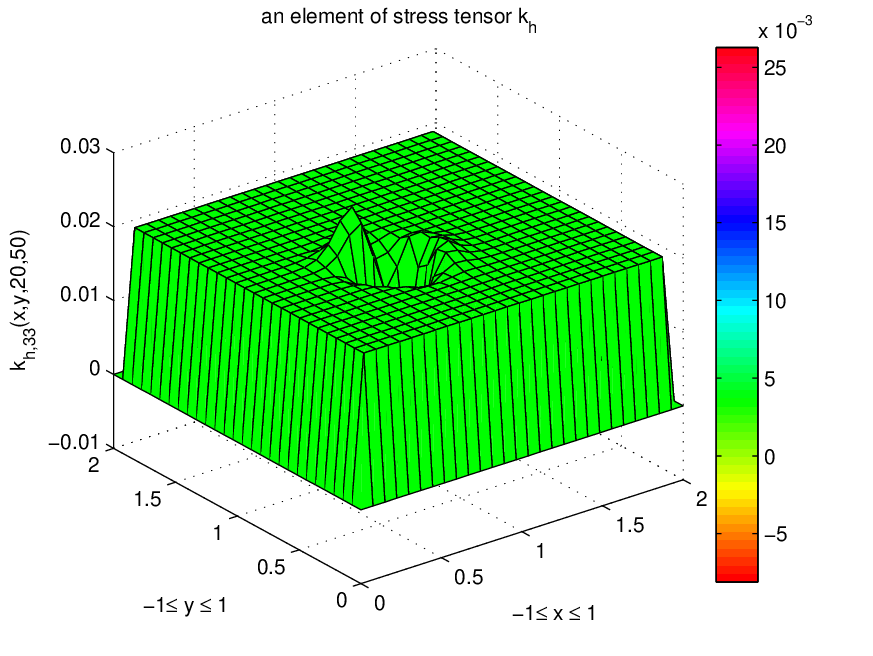,width=7cm} & \psfig{file=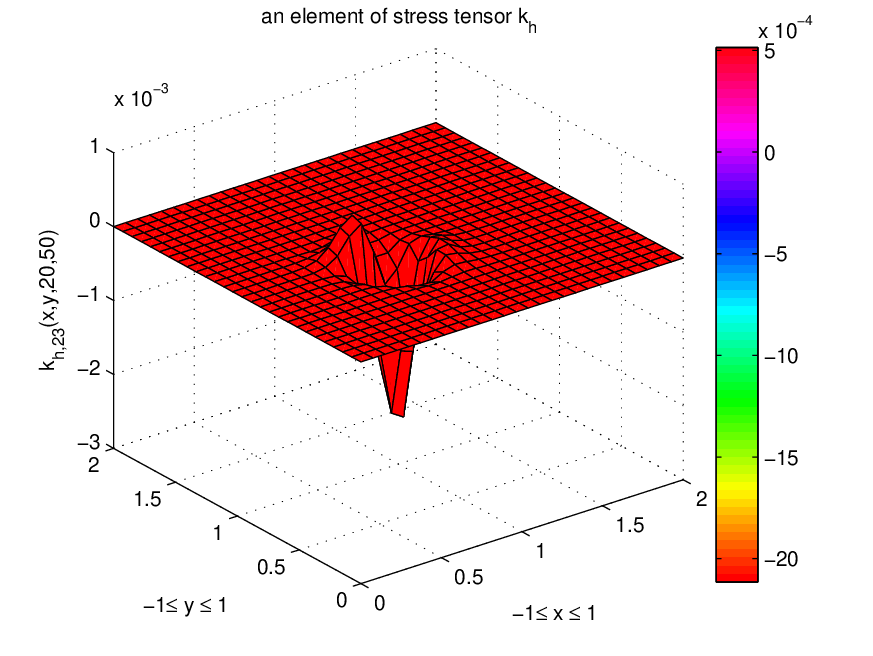,width=7cm}
         \end{tabular}
        \end{center}
         \caption{Graphs of displacement and stress tensor related to Dschang cliff landslides: first four figures for $g_{c}=7.6\%$ and last four figures for $g_{c}=0.7km$.}
          \label{fig3}
          \end{figure}

      \begin{figure}
     \begin{center}
       Analysis of Mbankolo and Gouache landslides with an altitude difference $g_{c}=0.209km$ and $g_{c}=0.0438km$.
      \begin{tabular}{c c}
         \psfig{file=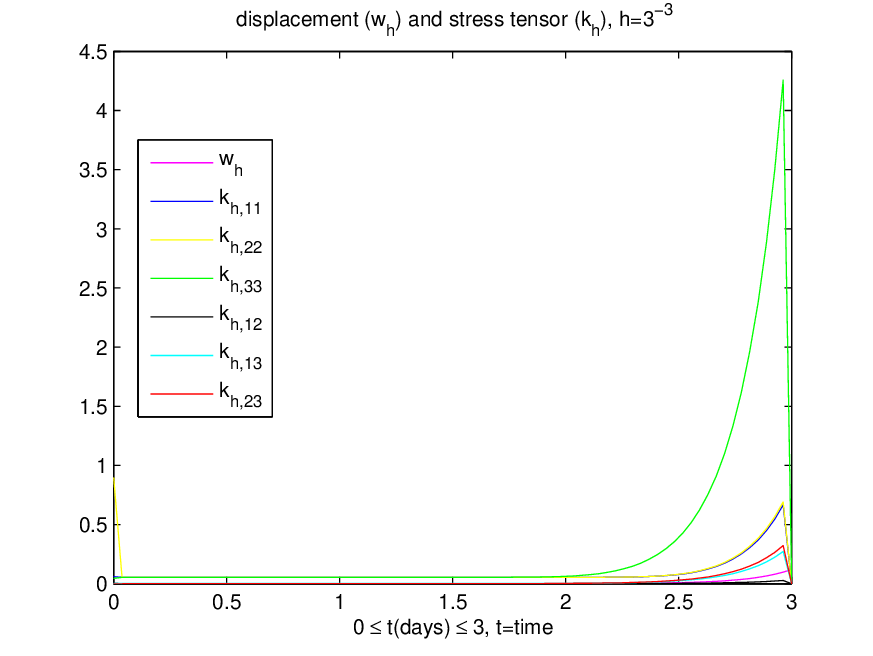,width=5.5cm} & \psfig{file=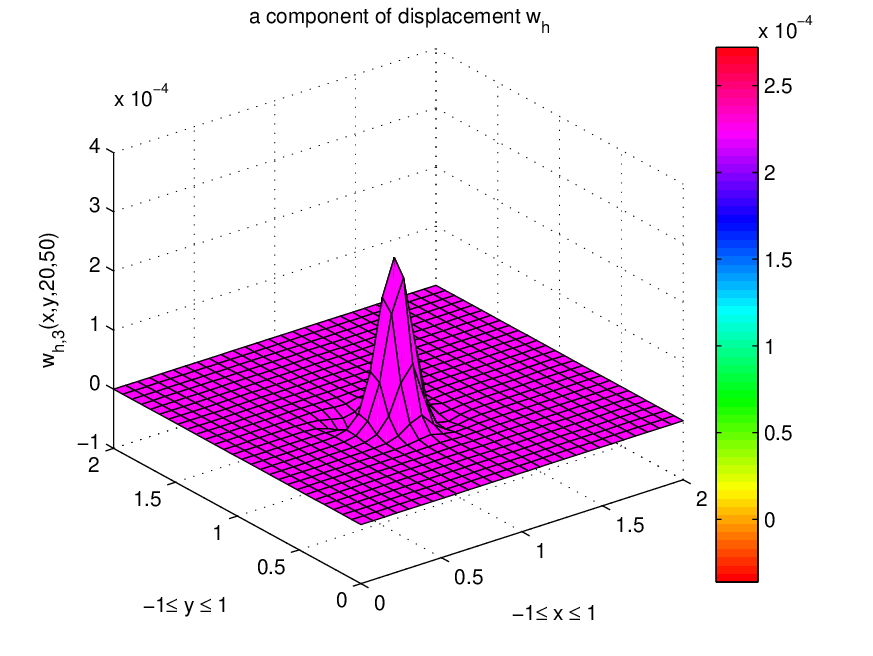,width=5.5cm}\\
         \psfig{file=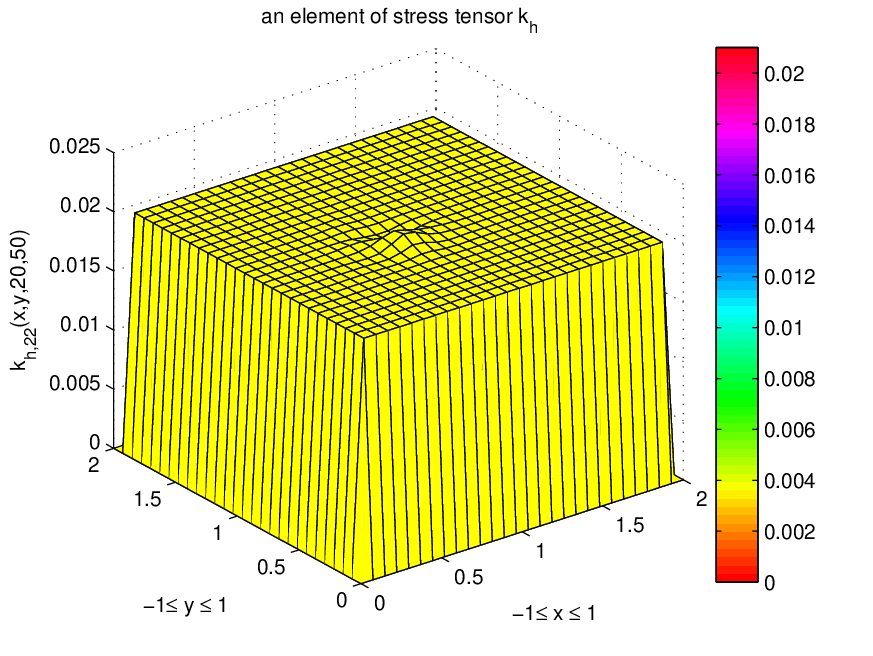,width=5.5cm} & \psfig{file=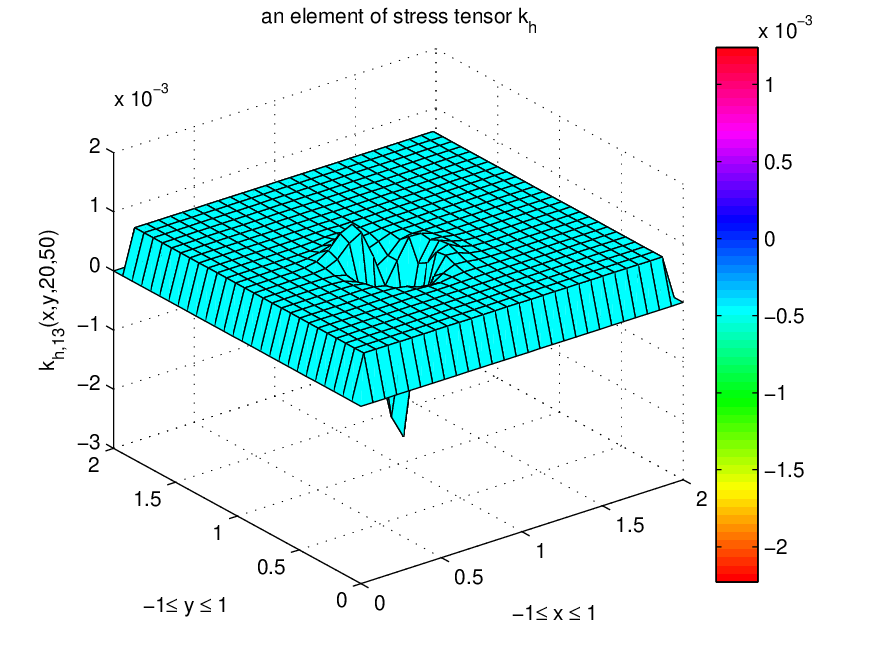,width=5.5cm}\\
         \psfig{file=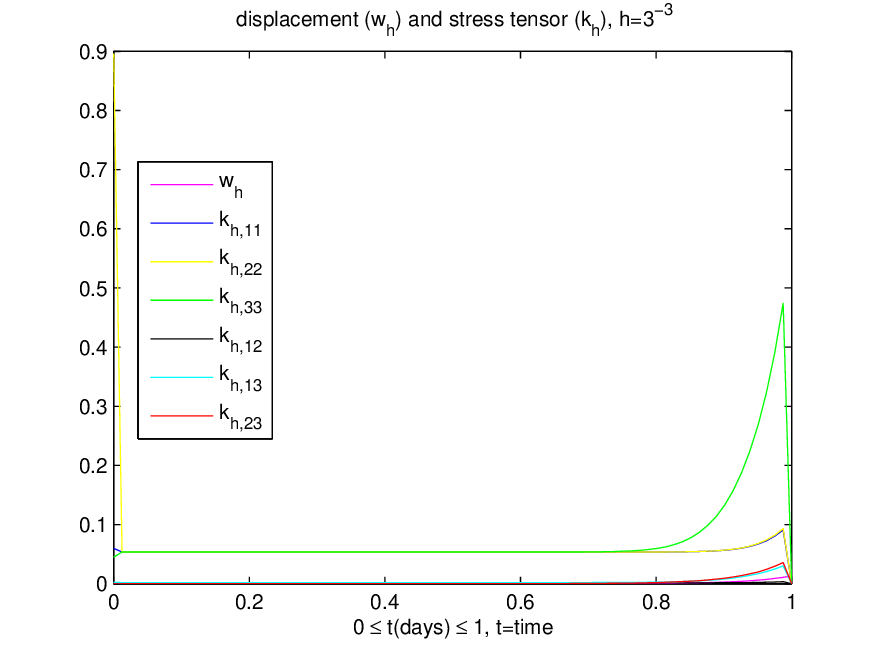,width=5.5cm} & \psfig{file=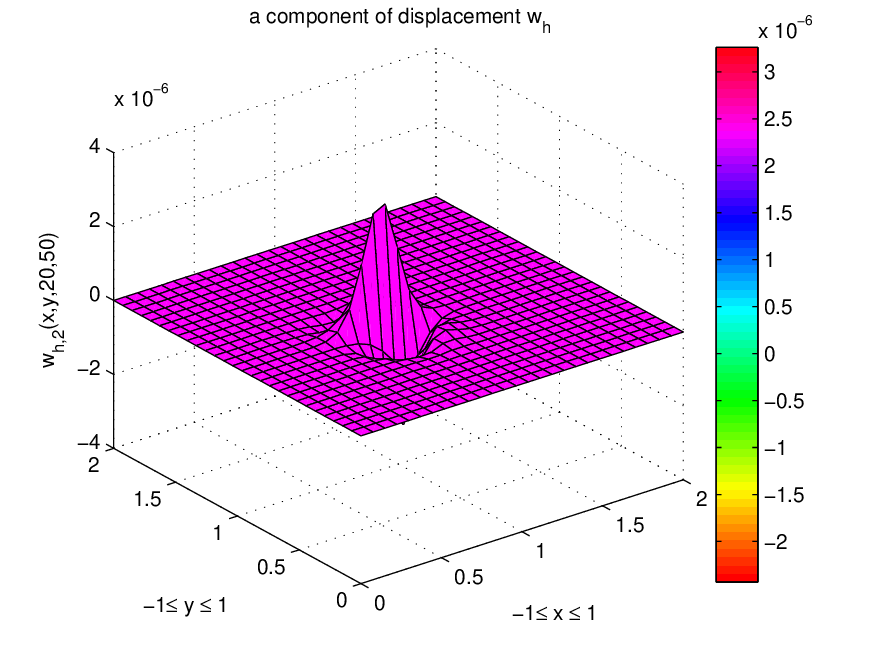,width=5.5cm}\\
         \psfig{file=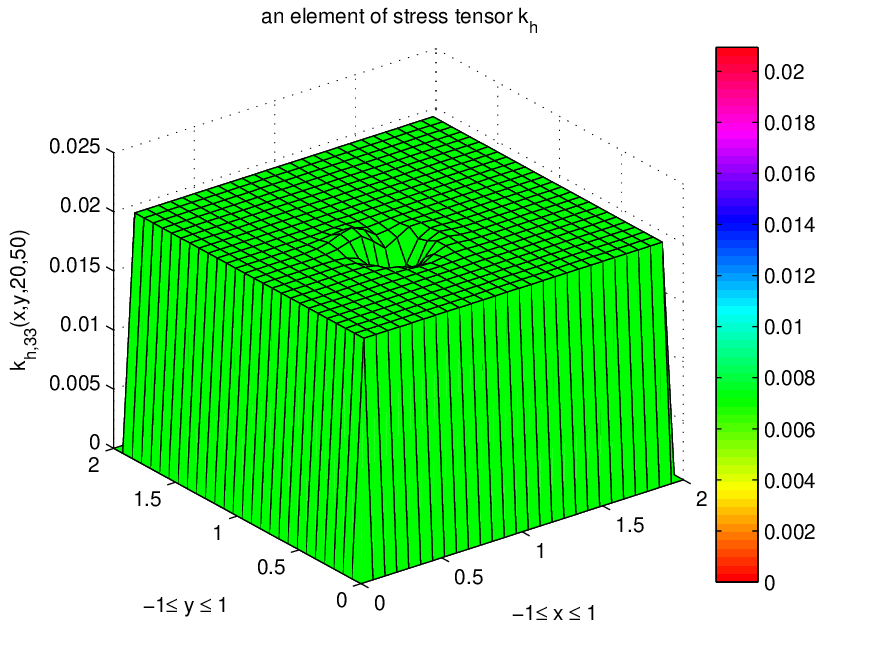,width=5.5cm} & \psfig{file=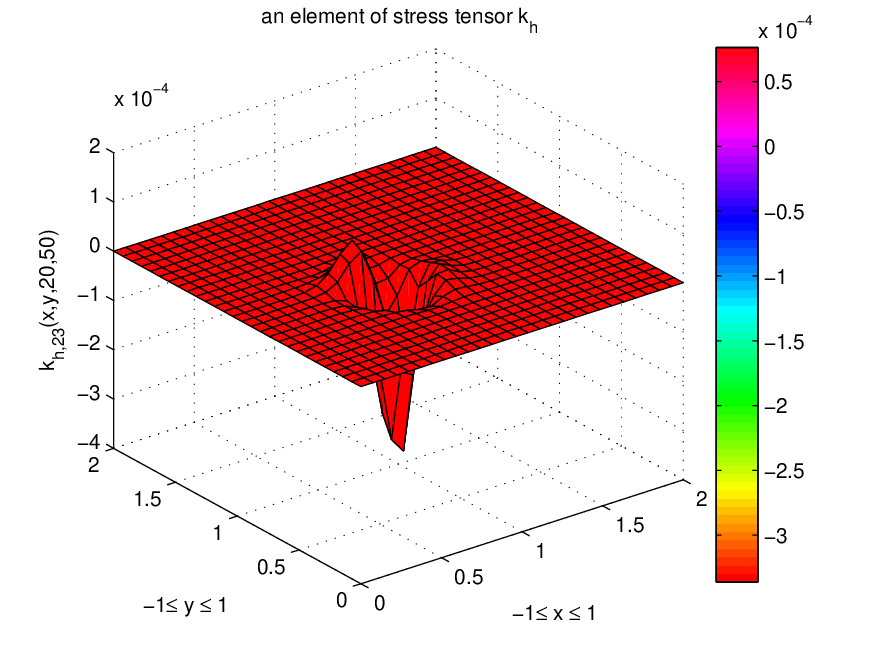,width=5.5cm}
         \end{tabular}
        \end{center}
         \caption{Graphs of displacement and stress tensor corresponding to Mbankolo landslides (first four figures) and Gouache landslides (last four figures).}
          \label{fig4}
          \end{figure}

           Under the time step restriction $(\ref{sr})$, both displacement and elements of the symmetric stress tensor provided by the developed modified Lax-Wendroff/interpolation approach with finite element method are displayed in Figure $\ref{fig2}$. This figure suggests that the new computational technique $(\ref{28})$-$(\ref{31})$ for solving the three-dimensional system of tectonic deformation equation $(\ref{1})$ subjects to initial-boundary conditions $(\ref{2})$-$(\ref{3})$ is stable and convergent under the suitable condition $(\ref{sr})$. It's important to remind that the error represented in Figure $\ref{fig2}$ is associated with the displacement. Additionally, it follows from the graphs related to displacement and stress tensor that the geological structure deformation starts at a focus centered in the domain $\Omega$ and the debris propagate and reach a maximum value. Figure $\ref{fig3}$ describes the behavior of Dschang cliff landslides during the period $5$-$7$ November $2024$ using a slope action equals $7.6\%$ and an altitude difference of $0.7km$. The first four graphs in the figure show the behavior of displacement and stress tensor elements (first graph) along with the first component of displacement and two elements of the stress tensor obtained using a slope action $7.6\%$ while the last four graphs present both displacement and stress tensor coefficients together with the second component of displacement and two elements of the tensor utilizing an altitude difference equals $0.7km$. We observe from Figure $\ref{fig3}$ that the graphs corresponding to a slope action $7.6\%$ are almost similar to the ones associated with an altitude difference $0.7km$. This suggests that the landslides hazards should be assessed and predicted by the new algorithm $(\ref{28})$-$(\ref{31})$ using either a slope action or an altitude difference. In addition, the use of other parameters such as: temperature, rainfall intensity, etc..., in the developed computational technique could also provide useful information regarding the disaster risks. Finally, the displacement and symmetric stress tensor corresponding to Mbankolo and Gouache landslides are displayed in Figure $\ref{fig4}$ with associated difference altitudes: $0.0438km$ and $0.209km$, respectively. As already indicated in Dschang cliff events, the first four graphs consider the Mbankolo damages occurred from $6$-$8$ October $2023$, whereas the last four ones deal with the Gouache disasters occurred on $29$ October $2019$.\\

       \section{General conclusions and future works}\label{sec5}
        In this paper, we have developed a modified Lax-Wendroff/interpolation approach with finite element procedure in an approximate solution of a three-dimensional system of geological structure deformation model $(\ref{1})$, subjects to initial and boundary conditions $(\ref{2})$ and $(\ref{3})$, respectively. The stability and error estimates of the new computational technique $(\ref{28})$-$(\ref{31})$ have been deeply analyzed using the $L^{\infty}(0,T_{f};\text{\,}L^{2})$-norm. Under an appropriate time step limitation $(\ref{sr})$, the theoretical studies suggested that the new algorithm is stable, temporal second-order accurate and spatial convergent with order $O(h^{p})$, where $h$ denotes the grid space and $p$ is a positive integer greater than or equal $2$. A wide set of numerical experiments have confirmed the theory. Specifically, the graphs (Figures $\ref{fig2}$) show that the proposed approach $(\ref{28})$-$(\ref{31})$ is stable whereas \textbf{Tables 1-2} indicate that the developed computational method is second-order convergent in time and spatial third-order accurate. Furthermore, both Figures $\ref{fig2}$ and Tables $1$-$2$ show that the displacement and stress tensor elements do not increase with time. This observation suggests that the proposed approach overcomes the drawbacks raised by a wide set of numerical methods widely discussed in the literature \cite{4jl,12jl,6jl,7jl,lyl}. Finally, an example of a three-dimensional system of elastodynamic problem considers the landslide disasters observed in the west and center regions of Cameroon from $29$ October $2019$ up to $5$-$7$ November $2024$. Specifically, some discussions on the landslides displacement and stress tensor associated with Dschang cliff disasters ($5$-$7$ November $2024$), Mbankolo landslides ($6$-$8$ October $2023$) and Gouache damages ($29$ October $2019$) are presented in Tables $3$-$6$ and Figures $\ref{fig3}$-$\ref{fig4}$ utilizing either a slope action or an altitude difference. These tables and figures suggest that the new algorithm $(\ref{28})$-$(\ref{31})$ should be considered as a powerful tool for providing useful data and related information on some natural disasters in Cameroon (also in the world) which would help communities to be informed about the hazard zones and allow people to avoid the landslide damages. The development of a combined Lax-Wendroff/Crank-Nicolson technique with finite element method for solving a three-dimensional system of nonlinear tectonic deformation problem which is more suitable to model large landslides, rocks avalanche and other natural disasters will be the subject of our future works.

      \subsection*{Ethical Approval}
     Not applicable.
     \subsection*{Availability of supporting data}
     Not applicable.
     \subsection*{Declaration of Interest Statement}
     The author declares that he has no conflict of interests.
     \subsection*{Funding}
     Not applicable.
     \subsection*{Authors' contributions}
     The whole work has been carried out by the author.

     \end{document}